\newtheorem{lemma}{Lemma}
\newtheorem{theorem}{Theorem}
\newcommand{\s}{\sigma}
\newcommand{\tx}{\tilde x}
\newcommand{\lp}{\Delta}
\newcommand{\la} {\langle}
\newcommand{\ra}{\rangle}
\begin{document}

\title{Curvature-Adapted Hypersurfaces of 2-type\\
in non-flat quaternionic Space Forms}

\author{Ivko Dimitri\'c}

\thanks{
\newline 2000 Mathematics Subject Classification: 53C40, \;53C42.
\newline
Key words and phrases: quaternionic space form, Laplacian,
curvature-adapted hypersurface, finite Chen-type, tube about a submanifold, principal curvatures of a hypersurface.}

\begin{abstract}
In a curvature-adapted hypersurface $M$ of a quaternionic-K\"ahler manifold $\overline M$  
the maximal quaternionic subbundle $\mathcal D$ of $TM$ and its orthogonal complement $\mathcal D^{\perp}$ in $TM$ are invariant subspaces of the shape operator at each point.
We classify curvature-adapted real hypersurfaces $M$ of non-flat quaternionic space forms $\mathbb HP^m$ and $\mathbb HH^m$ that are 
of Chen type 2 in an appropriately defined (pseudo) Euclidean space of quaternion-Hermitian matrices, where in the hyperbolic case we assume additionally that the hypersurace has constant principal curvatures. The 
position vector of a such submanifold in the ambient (pseudo) Euclidean space is decomposable into a sum of a constant vector and two nonconstant 
vector eigenfunctions of the Laplace operator of the submanifold  belonging to different eigenspaces. In the quaternionic projective space they include geodesic hyperspheres of arbitrary radius $r \in (0, \pi/2)$
except one, two series of tubes about canonically embedded quaternionic projective spaces of lower dimensions and two particular tubes about a
canonically embedded $\mathbb CP^m \subset \mathbb HP^m $.
 On the other hand, the list of 2-type 
curvature-adapted hypersurfaces with constant principal curvatures in $\mathbb HH^m$ is reduced to geodesic spheres and tubes of arbitrary radius about totally geodesic 
quaternionic hyperplane $\mathbb HH^{m-1}.$ Among these hypersurfaces we determine those that are mass-symmetric or minimal. We also show that the horosphere $H_3$ in $\mathbb HH^m $ is not of finite type but satisfies $\Delta^2\widetilde x =$ const.
\end{abstract}

\maketitle

\section{\bf Introduction}

\vskip 0.4truecm
\par In the theory of immersions of finite type developed by B. Y. Chen \cite{Che}, 
a submanifold $M^n$ of Euclidean or pseudo-Euclidean space isometrically immersed by $x: M^n \to E^N_{(K)} $ is said to be of finite type
in $E^N_{(K)}$ if the position vector $x$ can be decomposed into a finite sum of vector eigenfunctions of the Laplacian $\Delta _M$
on $M,$ viz.
\begin{equation}\label{eq1} x = x_0 + x_{t_1} + \dots + x_{t_k};  \qquad
\lp x_{t_i} = \lambda_{t_i} x_{t_i}, \;\; i= 1,...,k, \quad \lambda_{t_i} = \text{const} ,
\end{equation}
where $ x_0 = \text{const},\,  x_{t_i} \neq \text{const}, $ and the Laplacian acts on a vector-valued function 
componentwise. If $\lambda_{t_i} \in \Bbb R$ are all different, the immersion is said to be of Chen-type $k$  or simply of $k-$type. This 
definition is a generalization of the notion of minimal submanifolds of a  hypersphere or the ambient Euclidean space, whose
Chen-type is 1. For a compact submanifold, the constant part $x_0$ is the center of mass and if $x$ immerses $M^n$ into a central hyperquadric
of a Euclidean or pseudo-Euclidean space, the immersion is said to be mass-symmetric in that hyperquadric if $x_0$ coincides with the center of the said hyperquadric. Moreover, decomposition (\ref{eq1}) also makes sense for noncompact submanifolds, but $x_0$ may not be uniquely 
determined, namely in the case when one of the eigenvalues $\lambda_{t_i}$ above is zero (This is the case of submanifolds of null $k$-type).

\medskip
This notion can be extended to submanifolds
$x: M^n \to \overline M$ of a more general manifold $\overline M$ as long as there is a
``reasonably nice" (typically equivariant, with parallel second fundamental form) embedding\, $\Phi: \overline M \to E^N_{(K)}$ of the ambient 
manifold $\overline M$ into a suitable (pseudo) Euclidean space, in which case $M$ is said to be of Chen-type $k$ (via $\Phi$) if the 
composite immersion $\widetilde x:=\Phi \circ x$ is of Chen-type $k$.
\par
Symmetric spaces of rank 1 (spheres and projective and hyperbolic spaces),  allow some of the simplest
equivariant embeddings into a certain (pseudo) Euclidean space
$E^N_{(K)}$ of suitable Hermitian matrices over an appropriate field $\mathbb F$ by the projection
operators (see, for example \cite{Tai}, \cite{Sak}, \cite{Che} \cite{Ros1}, \cite{GR}, \cite{Dim2}). In the case of $\mathbb FP^m,$ this is the so-called first standard embedding, achieved, up to a rigid motion,  by the basis of eigenfunctions
corresponding to the first nonzero eigenvalue of the Laplacian. We use the notation $\mathbb HQ^m := \mathbb HQ^m(4c), \; c = \pm 1,$ 
to denote jointly either of the two simply-connected quaternionic model space forms: the quaternionic projective space $\mathbb
HP^m(4)$ or the quaternionic hyperbolic space $\mathbb HH^m(-4)$ of the respective quaternionic sectional curvatures $4$ and $-4.$  We shall assume $m \geq 2$.
Further, denote by  $\Phi : \mathbb HQ^m \to
E^N_{(K)}$ the embedding that associates to every quaternionic line in $\mathbb H^{m+1}$ (or time-like quaternionic line in 
the hyperbolic case) the operator (i.e. its matrix) of the
orthogonal projection onto it. The study of finite-type
submanifolds $x: M^n \to \mathbb HQ^m $ is then the study of the
spectral behavior of the associated immersion $\widetilde x = \Phi
\circ x$ of $M^n$ into $E^N_{(K)},$ i.e. of the possibility of
decomposing $\widetilde x$ into finitely many eigenfunctions of $\Delta_M.$ It is therefore interesting to investigate to what extent the
analytic information contained in the spectral resolution (\ref{eq1}) of the immersion into finitely many terms, determines the geometry
of the submanifold.
\par
A $k-$type immersion $x$ satisfies
a polynomial equation in the Laplacian, $P(\Delta )(x - x_0) = 0,$
where $P(t)$ is a  monic polynomial of the least degree ($k$) with
coefficients that  are the elementary symmetric functions of the
eigenvalues $\lambda_i, \; i = 1, \cdots , k $ \cite{Che}. Thus,
in the study of $k-$type submanifolds one necessarily deals with
the $k-$th iterated Laplacian $\Delta ^k x$ of the immersion,
which makes the investigation computationally very complex for
higher values of $k.$ Hence, the most promissing study involves 
submanifolds of low type: 1, 2, or 3. In particular, immersions $x: M^n \to E_{(K)}^N$ which are of 2-type  satisfy 
$\Delta^2 x - a \Delta x + b(x - x_0) = 0$ for some constants $a$ and $b$ and their study is related to the study of (extrinsically) biharmonic 
submanifolds satisfying the condition $\Delta^2 x = 0.$
\par
Regarding the situation in complex space forms, the study of 1-type
submanifolds of $\mathbb CP^m$ was begun in \cite{Ros1} and a
parallel investigation for hypersurfaces of $\mathbb CH^m$ was
carried out in \cite{GR}. A complete classification of 1-type
submanifolds of a non-flat complex space form $\mathbb CQ^m (4c)$
was achieved in our paper \cite{Dim1}. Submanifolds of Chen-type 2 have been
studied by several authors. Compact K\"ahler submanifolds of
$\mathbb CP^m$ were characterized by A. Ros \cite{Ros2} and
subsequently classified by Udagawa \cite{Uda2}, \cite{Uda3}, as compact
Einstein-K\"ahler parallel submanifolds, an example being the complex
quadric $Q^{m-1} \subset \mathbb CP^m$.
Minimal surfaces of 2-type were classified by Shen
\cite{She} and the classification of real hypersurfaces was undertaken
in \cite{MR} and  \cite{Uda4}, for hypersurfaces of
$\mathbb CP^m$ with constant mean curvature, and in \cite{Dim5}
for Hopf hypersurfaces of $\mathbb CQ^m $. Those 2-type Hopf (or CMC) hypersurfaces in $\mathbb CP^m(4)$ include all geodesic spheres except 
the one which is of 1-type, two series of tubes about canonically embedded (totally geodesic) 
$\mathbb CP^k(4) \subset \mathbb CP^m(4)$ for $k = 1, ..., m-2, $ and two particular tubes about the complex quadric $Q^{m-1}$. In a complex hyperbolic space the 
list of 2-type Hopf hypersurfaces includes only geodesic spheres and tubes about totally geodesic complex hyperbolic hyperplanes.
Submanifolds of non-flat real space forms which are of low type in a suitable (pseudo) Euclidean space of matrices via the immersion 
by projectors have been studied by several authors, see \cite{Dim4} and references there.

\vskip 0.3truecm
Among other rank-1 symmetric spaces, results on finite-type submanifolds of quaternionic space forms
and Cayley planes are rare and those are not studied as extensively in these ambient spaces because of a more complex expression for the curvature and higher dimension of the distribution $\mathcal D^{\perp}.$
\par
1-Type submanifolds of quaternionic projective spaces were investigated
in \cite{Dim2} where quaternion CR and anti-CR submanifolds of 1-type were classified. In particular, the only 1-type hypersurfaces 
of $\mathbb HP^m$ (up to rigid motions) are open portions of a geodesic  hypersphere of radius $r = \cot^{-1}\sqrt{3/(4m+1)}$. 
As a proper generalization of Hopf hypersurfaces of complex space forms to the quaternionic setting we have the notion of curvature-adapted
hypersurfaces of $\mathbb HQ^m, $ for which the normal Jacobi operator commutes with the shape operator, \cite{Dat}, \cite{Ber}, \cite{BV}. They are 
characterized by an equivalent condition that the maximal quaternionic subbundle $\mathcal D$ of the tangent space is an invariant subspace 
of the shape operator (and thus its orthogonal
complement $\mathcal D^\perp$ in $TM$ is also invariant) \cite{Ber}. 
In this paper we study curvature-adapted hypersurfaces of 
$\mathbb HQ^m$ of Chen-type 2, extending the results obtained for 2-type Hopf hypersurfaces in complex space forms to curvature-adapted ones in quaternionic space forms. Namely, we classify curvature adapted hypersurfaces of $\mathbb HP^m$ and $\mathbb HH^m$ (in the latter case assuming also constant principal curvatures) that are of 2-type in a suitably defined (pseudo) Eucldean space $E^N_{(K)}$ of Hermitian matrices. They include geodesic hyperspheres of arbitrary radius except one, tubes about $\mathbb HH^{m-1}$ in hyperbolic case, two series of tubes about quaternion projective spaces of lower dimension in $\mathbb HP^m$, and two particular tubes about canonically embedded $\mathbb CP^m \subset \mathbb HP^m$, whereas the other tubes about $\mathbb CP^m$ are shown to be of 3-type. Among these 2-type hypersurfaces we determine those that are mass-symmetric or minimal. We indicate how spectral decomposition can be obtained for a mass-symmetric 2-type hypersurface and give an explicit spectral resolution of $\widetilde x$ for geodesic spheres and tubes about $\mathbb CP^m(4)$ of radius $r = \frac 1 2 \cot^{-1}(1/\sqrt m)$ in $\mathbb HP^m(4)$. For a horosphere in $\mathbb HH^m$ we prove that it is not of finite type but satisfies the condition $\Delta^2 \widetilde x = \text{const} \neq 0$ and we prove that there are no real hypersurfaces of type 1 in $\mathbb HH^m$. These results are given in Theorems 1-4 and Lemmas 2 and 6.
\par
For the basic setup of the situation in quaternionic space forms and notation we refer to \cite{Ber}, \cite{Che}, \cite{Dim2}, \cite{Ish}, \cite{MP},\cite{CR}, \cite{AM1}, and \cite{LPS}.

\vskip 0.8truecm
\section{\bf Preliminaries}

\vskip 0.8truecm

Let $\mathbb H^m$ denote the $m-$dimensional quaternion number space $(m \geq 2)$, considered as a left vector space over (noncommutative) scalar  field 
of quaternions $\mathbb H$ and let $\mathbb HQ^m:= \mathbb HQ^m(4c)$ denote a $m-$dimensional non-flat model quaternionic space form, that is, either 
the quaternionic projective space $\mathbb HP^m(4)$ or the quaternionic hyperbolic space $\mathbb HH^m(-4)$ of constant quaternionic 
sectional curvature $4c = \pm 4.$ 
As is well known, $\overline M:= \mathbb HQ^m $ is a quaternion-K\"ahler manifold equipped with a quaternion-K\"ahler structure 
$\mathcal J,$ which is a rank-3 vector subbundle of $\text{End}\, (T\overline M)$ as oriented Riemannian bundle given an appropriate bundle metric and satisfying the following: For each $p \in \overline M$
there exists an open neighborhood $\overline G $ of $p$ and local basis of sections $J_1, J_2, J_3 \in \Gamma (\mathcal J)$ of $\mathcal J$
over $\overline G$ such that for every $q \in \{1, 2, 3 \}$ the following hold
\begin{equation}\label{eq2}
J_q^2 = - I, \quad \text{and} \quad \la J_q X, Y\ra = - \la X, J_q Y \ra ,
\end{equation}
i.e. $J_q$ is an almost Hermitian skew symmetric endomorphism of $T\overline M|_{\overline G}$ for $q = 1, 2, 3.$
\begin{equation}\label{eq3}
J_q J_{q+1} = J_{q+2} = - J_{q+1} J_q \quad (\text{indices}\mod 3),
\end{equation}
\begin{equation}\label{eq4}
\overline\nabla_X J \in \Gamma (\mathcal J), \quad\text{for all}\quad X \in \Gamma (T\overline M), \; J \in \Gamma (\mathcal J),
\end{equation}
i.e. $\mathcal J$ is a parallel subbundle of $\text{End}\, (T\overline M).$ Here and in what follows 
$\Gamma $ is used to denote the set of all (local) smooth sections of a bundle.

Any triple $\{J_1, J_2, J_3 \}$ of local sections of $\mathcal J$ satisfying $(\ref{eq2})-(\ref{eq4})$ is called a canonical local basis of
$\mathcal J.$ As a consequence of these conditions there exist 1-forms $\theta_1, \theta_2, \theta_3$ on $\overline G$ such that
\begin{equation}\label{eq5}
\overline\nabla_X J_q = \theta_{q+2}(X) J_{q+1} - \theta_{q+1}(X) J_{q+2} \quad (\text{indices} \mod 3 ),
\end{equation}
for all $X \in \Gamma (T\overline M), \; q \in \{1, 2, 3 \}.$ The Riemannian curvature tensor $\overline R$ of $\mathbb HQ^m(4c)$
has the form
\begin{equation}\label{eq6}
 \overline R(X, Y)Z = c\, [ \la Y, Z\ra X - \la X, Z\ra Y + \sum_{q=1}^3( \la J_qY , Z\ra J_qX -
\la J_qX , Z\ra J_qY - 2 \la J_qX , Y\ra J_qZ ) \, ], 
\end{equation}
for an arbitrary canonical local basis $\{J_1, J_2, J_3 \}$ of $\mathcal J.$
\par
By using a particular (pseudo) Riemannian submersion one can construct $\mathbb HQ^m$ and its embedding $\Phi$ into a suitable
(pseudo) Euclidean space of matrices by means of projections to quaternionic lines. We refer to \cite{Dim2} where this construction is 
carried out in detail for $\mathbb HP^m$ 
and we note that a similar construction works, {\it mutatis mutandis}, also for $\mathbb HH^m$ since it is the same kind of construction used
in the complex setting for $\mathbb CQ^m,$ \cite{Dim5}, \cite{GR}, \cite{Ros1}. For $\mathbb HH^m$ setting see also \cite{CR}, \cite{AM1}, \cite{AM2}, \cite{LPS}, \cite{OP}.
Consider first the standard Hermitian form $\Psi_c$ on $\mathbb H^{m+1}$ given by 
$\Psi_c (z, w) = c  z_0 \overline w_0 + \sum_{j=1}^m  z_j \overline w_j, \; z, w \in \mathbb H^{m+1},$
with the associated (pseudo) Riemannian metric $g_c = \text{Re}\,\Psi_c$ and the quadric hypersurface
$N^{4m+3}:= \{ z \in \mathbb H^{m+1}\, | \, \Psi_c (z, z) = c\}.$ When $c = 1, \; N^{4m+3}$ is the ordinary hypersphere $S^{4m+3}$ 
of $\mathbb H^{m+1} = \mathbb R^{4m+4}$ and when $c = -1, \; N^{4m+3}$ is an indefinite hyperbolic space $H_3^{4m + 3}$ of index 3 in 
$\mathbb H^{m+1}_1.$ The orbit space under the natural action of the group $Sp(1) = S^3$ of unit quaternions
on $N^{4m+3}$ defines $\mathbb HQ^m(4c),$ which becomes the base space of a (pseudo) Riemannian submersion with totally geodesic fibers. 
The standard embedding $\Phi$ into the set of $\Psi_c-$Hermitian matrices $\mathbf H^{(1)}(m+1)$ is achieved by identifying  a point, that is a 
quaternion line (or a time-like quaternion line in the hyperbolic case) with the projection operator onto it. Then one gets the 
following matrix representation of $\Phi$ at a point $p = [z],$ where $z = (z_j) \in N^{4m+3}\subset \mathbb H^{m+1}_{(1)}$is a row vector:
$$ \Phi ([z]) = \begin{pmatrix} |z_0|^2 & c \bar z_0 z_1 & \cdots & c \bar z_0 z_m \\ \bar z_1  z_0 & c |z_1|^2  & \cdots & c \bar z_1 z_m\\
\vdots & \vdots & \ddots & \vdots \\ \bar z_m  z_0 & c \bar z_m  z_1 & \cdots & c |z_m|^2
\end{pmatrix} .
$$
The image $\Phi (\mathbb HQ^m)$ of the space form considered is the set $\{P\}$ of Hermitian projectors ($P^2 = P$) with trace 1 and 
is contained in the hyperquadric of $\mathbf H^{(1)}(m+1)$ centered at $I/(m+1)$ and defined by the equation 
\begin{equation}\label{eq7}    \langle P - I/(m+1),\, P - I/(m+1)\rangle = \frac{cm}{2(m+1)}, 
\end{equation}
where $I$ denotes the $(m+1)\times (m+1)$ identity matrix and the metric on $\mathbf H^{(1)}(m+1)$ is given by $\langle S, T\rangle = \frac c 2 \, Re\, \text{tr} (S T).$
The second fundamental form $\s $ of this embedding is parallel, i.e. $\overline\nabla \sigma = 0.$
The following formulas for the shape operator of $\, \Phi \,\, $ in the direction of $ \sigma (X,Y) $ follow from \cite{Dim2} and 
are extensions of the corresponding well-known formulas of A. Ros in the complex case (see, for example \cite{Ros1}, \cite{Ros2}, \cite{GR}, \cite{Dim2}).
\begin{eqnarray}\label{eq8} \la \s (X,Y), \s (V, W) \ra &=& c\, [2 \la X, Y\ra \la V, W \ra +
 \la X, V\ra \la Y, W \ra + \la X, W\ra \la Y, V \ra \nonumber\\ 
&&\qquad  + \sum_{q=1}^3 ( \la J_qX, V\ra \la J_qY, W \ra + \la J_qX, W \ra \la J_qY, V \ra) ], 
\end{eqnarray}

\begin{equation}\label{eq9} \overline A_{\sigma (X,Y)}V = c\, [2 \langle X, Y\rangle V + \langle X,
V\rangle Y + \langle Y, V\rangle X + \sum_{q=1}^3 (\langle J_qX, V\rangle J_qY + \langle
J_qY, V\rangle J_qX )]. 
\end{equation}

One also verifies (see \cite{Dim2})
\begin{equation}\label{eq10}  \qquad \langle \sigma (X, Y), \widetilde x \rangle = - \langle X, Y
\rangle , \qquad \langle \sigma (X, Y), \, I \rangle = \, 0, \qquad \sigma (J_qX, J_qY) = \sigma (X, Y), \; q = 1, 2, 3. 
\end{equation}

For additional properties of the embedding $\Phi$ see \cite{Tai}, \cite{Sak}, \cite{Che}, \cite{Ros1}, \cite{MP}, \cite{Dim1}.
\par

Let $x: M^n \to \mathbb HQ^m$ be an isometric immersion of a Riemannian $n-$manifold as a real hypersurface of a quaternionic 
space form (thus $n = 4m-1$). Then we have the associated composite immersion $\widetilde x = \Phi \circ x,$ which realizes $M$ as a submanifold 
of the (pseudo) Euclidean space $E^N_{(K)}:= \mathbf H^{(1)}(m+1) $ of dimension $N = (m+1)(2m+1) $ and appropriate index $K$, equipped with the 
usual trace metric $\la A, B \ra = \frac c 2 Re\, \text{tr}\, (AB) $. In this notation the subscripts and superscripts in parenthesis 
are present only in relation to $\mathbb HH^m,$ so that the superscript 1 in $\mathbf H^{(1)}(m+1)$ is optional and appears only in 
the hyperbolic case, since the construction of the embedding is based on the form $\Psi$ in $\mathbb H_1^{m+1}$ of index 1.
\par
Let $\xi $ be a local unit vector field normal to $M$ in $\mathbb HQ^m ,$  $A$ the shape operator of the
immersion $x,$ and let $\kappa = (1/n)\,\text{tr}\, A $ be the mean curvature of $M$ in $\mathbb HQ^m,$ so that the mean
curvature vector $H$ of the immersion equals $H = \kappa \xi .$
Further, let $\overline\nabla , \; \overline A, \overline D,$ denote respectively the Levi-Civita
connection, the shape operator, and the metric connection in the normal
bundle, related to $\mathbb HQ^m$ and the embedding $\Phi$. Let the same letters
without bar denote the respective objects for a submanifold $M$ and the immersion $x,$
whereas we use the same symbols with tilde to denote the corresponding objects related to the
composite immersion $\tilde x : = \Phi \circ x$ of $M$ into the (pseudo) Euclidean space
$\mathbf H^{(1)}(m+1).$ As usual, we use $\s$ for the second fundamental form of
$\mathbb HQ^m$ in $E^N_{(K)}$ via $\Phi$ and $h$ for the second fundamental form of
a submanifold $M$ in $\mathbb HQ^m$. An orthonormal basis of the tangent space $T_pM$ at a general
point will be denoted by $\{e_i \}, i = 1, 2, {\ldots} , n,$ and indices $i, j$ will generally range from $1$ to $n$, whereas $q$ and $k$ range from 1 to 3. 
\par
We give first some important formulas which will be repeatedly used throughout this paper.
For a general submanifold $M, $ of a Riemanian manifold $\overline M, \,$ local tangent fields $X, Y \in \Gamma (TM)$ and a
local normal field $\xi \in \Gamma (T^\perp M),$ the
formulas of Gauss and Weingarten are
\begin{equation}\label{eq11} \overline \nabla_XY = \nabla_XY +
h (X, Y) ; \qquad\qquad \overline \nabla_X \xi = -A_\xi X + D_X\xi.  
\end{equation}

In particular, for a hypersurface of a quaternionic space form $\mathbb HQ^m $
with (locally defined) unit normal vector field $\xi $ and the corresponding shape operator $A = A_\xi,$ they
become
\begin{equation}\label{eq12} \overline \nabla_XY =\nabla_XY + \langle AX, Y \rangle \xi
;\qquad \qquad \overline \nabla_X\xi = - AX. 
\end{equation} 

Let $\{J_q\}, \, 1 \leq q \leq 3,$ be a triple of  almost complex structures of $\mathbb HQ^m$ which form a canonical basis of $\mathcal J$ 
and $U_q \in \Gamma (TM)$ be the tangent vector fields defined by $ U_q:= -J_q\xi , \, q = 1, 2, 3$. Define distributions $\mathcal D^\perp = Span_{\mathbb R} \{U_1, U_2, U_3  \}$ and $\mathcal D$ to be the orthogonal complement of $\mathcal D^\perp$ in $TM$ so that $\mathcal D$ is the maximal subbundle of $T\overline M$ which is left invariant by the quaternionic structure $\mathcal J|_{TM}$. 
Further, define endomorphisms
$S_q$ of the tangent space and a normal bundle valued 1-forms $F_q$ by
$$ S_qX = (J_qX)_T , \quad F_qX = (J_qX)_N  = \langle X, U_q \rangle \xi ,$$
i.e for $X \in \Gamma (TM), \;\; J_qX = S_qX + F_qX $ is the
decomposition of $J_qX$ into tangential and normal to submanifold parts. Further, we obtain the following from (\ref{eq2}) - (\ref{eq5}) and (\ref{eq11}):
\begin{equation}\label{eq13}
S_qU_q = 0, \quad  S_q U_{q+1} = U_{q+2}, \quad S_q U_{q+2} = - U_{q+1}
\end{equation}

\begin{equation}\label{eq14}
S_qX = J_qX - \langle X, U_q \rangle \xi , \qquad S_q^2X = -X + \langle X, U_q \rangle U_q,
\end{equation}

\begin{equation}\label{eq15}
\nabla_XU_q = S_qAX + \theta_{q+2}(X)\, U_{q+1} - \theta_{q+1}(X)\, U_{q+2}, 
\end{equation}

Note that both $\mathcal D$ and $\mathcal D^\perp$ are $S_q$-invariant, for each $q = 1, 2, 3$. The equations of Codazzi for a hypersurface of $\mathbb HQ^m(4c)$ is  given by
\begin{equation}\label{eq16}
(\nabla _XA)Y - (\nabla_YA)X = c\,\sum_{q=1}^3 [\langle X, U_q \rangle S_qY -
\langle Y, U_q \rangle S_qX - 2\langle S_qX, Y \rangle U_q]. 
\end{equation}

For any hypersurface $M$ of a Riemannian manifold $\overline M$ with curvature tensor $\overline R$ one defines the normal
Jacobi operator $\mathcal K \in \text{End}\, (T_pM) $ at a point $p \in M$ by $\mathcal K(X):= \overline R (X, \xi ) \xi $ where $\xi $ is
a unit normal to $M$ at $p$ (determined up to a sign) and $X$ an arbitrary tangent vector at $p.$ A hypersurface $M$ is said to be 
{\it curvature-adapted} to $\overline M$ if the normal Jacobi operator $\mathcal K$ of $M$ commutes with the shape operator $A$ at every point, i.e. the two operators are simultaneously diagonalizable. Equivalently, 
$\mathcal K\circ A = A\circ \mathcal K$ translates into $\overline R (AX, \xi ) \xi = A (\overline R (X, \xi ) \xi ),$ 
for every $X \in \Gamma (TM),$ and for a hypersurface of a quaternionic space form 
$\mathbb HQ^m (4c)$ of quaternionic sectional curvature $4c$ whose curvature tensor is given by (\ref{eq6}) this 
further leads to
\begin{equation}\label{eq17}
\sum_{q = 1}^3 \langle AX, U_q\rangle U_q = \sum_{q = 1}^3 \langle X, U_q\rangle AU_q, \quad \text{for every}\;\; 
X \in \Gamma (TM),
\end{equation} 
i.e. $(AX)_{\mathcal D^{\perp}} = A(X_{\mathcal D^{\perp}}),$ which in turn is equivalent to the condition that the distributions 
$\mathcal D^{\perp}$ and $\mathcal D$ in $TM$ are invariant under the action of the
shape operator $A,$ meaning $A \mathcal D^{\perp} \subset \mathcal D^{\perp} $ and $A \mathcal D \subset \mathcal D.$ The notion of curvature-adapted hypersurface was introduced in \cite{Dat} and effectively used in the study of hypersurfaces in \cite{BV}, \cite{Ber}, more recently in \cite{AM1}, \cite{AM2}, \cite{KN} and other works.
For such a hypersurface
$AU_q = \sum_{k=1}^3 a_q^k U_k $ for some real functions $a_q^k, $ symmetric in $k$ and $q,$ and then using
(\ref{eq13}) we get $\sum_q S_q A U_q = 0 $.

\vskip 0.3truecm
The gradient of a smooth function $f$ is a vector field
$\nabla f := \sum_i (e_i f)e_i$ and the Laplacian acting on smooth functions is defined as 
$\lp f := \sum_i [(\nabla_{e_i}e_i) f - e_i e_i f],$ where $\{ e_i\}, 1 \leq i \leq n$, is an orthonormal basis of the tangent space of $M.$
The Laplace operator can be extended to act on a vector field $V$ in $E^N_{(K)} = \mathbf H^{(1)}(m+1)$ along $\widetilde x
(M)$ by
\begin{equation}\label{eq18}
\Delta V = \sum_i [\widetilde\nabla_{\nabla_{e_i}e_i} V - \widetilde\nabla_{e_i}
\widetilde \nabla_{e_i} V].
\end{equation}
The product formula for the Laplacian, which will be used in the ensuing
computations, reads
\begin{equation}\label{eq19}
\Delta (f\, g) = (\Delta f)\, g + f(\Delta g) - 2 \sum_i (e_if)( e_ig) ,
\end{equation}
for smooth functions $f, \,g \in C^\infty (M)\, $ and it can then be extended to hold for the scalar
product of vector valued functions, hence also for product of matrices, in a natural way.  We shall use
the notation $f_k:= \text{tr}\, A^k, $ and in particular $f:= f_1 = \text{tr}\, A.$
For an endomorphism $B$ of the tangent space of $M$ we define
$\text{tr} (\nabla B): = \sum_{i=1}^n (\nabla_{e_i}B)e_i .$ We will use $V_\mu$ to denote vector space of principal vectors (eigenvectors of $A$) corresponding to principal curvature $\mu$ and $\frak s(\mathcal D), \frak s(\mathcal D^\perp)$ to denote the spectrum (the set of eigenvalues) of $A$ when restricted to $\mathcal D$ and $\mathcal D^\perp$, respectively, at a point considered.
\par
We shall assume all manifolds to be smooth and connected, and all immersions smooth.
\vskip 1truecm

\section{\bf The Second Iterated Laplacians of a Real Hypersurface of $\mathbb HQ^m$}

\vskip 0.6truecm
From a well-known formula of Beltrami we have 
\begin{equation}\label{eq20} 
\Delta \widetilde x = - n \widetilde H = - f\,\xi - \sum_{i=1}^n \sigma (e_i, e_i ),  
\end{equation}
where here, and in the following, we understand the Laplacian $\Delta $ of $M$ to be applied to
vector fields along $M$ (viewed as $E^N_{(K)}-$valued functions, i.e.  matrices) componentwise.

The product formula (\ref{eq19}) gives

\begin{equation}\label{eq21}
\Delta^2\widetilde x := \Delta (\Delta\widetilde x ) = - (\Delta f )\xi - f
(\Delta \xi ) + 2 \sigma (\nabla f , \xi ) - 2 A(\nabla f )- \sum_i
\Delta\, (\sigma (e_i, e_i )).  
\end{equation}
Moreover, we compute
\begin{eqnarray*}
\Delta\xi &=& \sum_i [\widetilde \nabla _{\nabla_{e_i}e_i}\xi -
\widetilde \nabla _{e_i}\widetilde \nabla_{e_i} \xi ]\\ 
&=& \sum _i [-A(\nabla_{e_i}e_i ) + \sigma (\nabla_{e_i}e_i, \xi ) + \overline {\nabla}_{e_i}(Ae_i) 
+ \sigma (e_i, Ae_i) + \overline A_{\sigma (e_i, \xi )} e_i - \overline{D}_{e_i} (\sigma (e_i, \xi ))]. 
\end{eqnarray*} 
Using (\ref{eq9}), the parallelism of $\sigma ,$  and the fact that $\, \text{tr}\, (\nabla A) = \sum_{i = 1}^n (\nabla_{e_i}A) e_i =  \nabla (\text{tr}\, A) = \nabla f $
(which follows from the Codazzi equation), we obtain
\begin{equation}\label{eq22}
\Delta\xi = \nabla f + [f_2 + c(n - 3)]\, \xi - f\, \sigma (\xi, \xi ) + 2 \sum_i \sigma (e_i, Ae_i).  
\end{equation}

\par Further computations yield
\begin{equation}\label{eq23}
\sum_i \widetilde \nabla_X \Big(\sigma (e_i, e_i)\Big) = - 2 c (n + 4) X + 2 c \sum_{q = 1}^3 \langle X, U_q \rangle U_q + 2 \sigma (AX, \xi), 
\end{equation}
so that starting from (\ref{eq18}) and using (\ref{eq11}) - (\ref{eq15}) and (\ref{eq23}) we get

\begin{eqnarray}\label{eq24} \sum_i \Delta ( \sigma (e_i, e_i)) &=& 2c \, [ (n+5)f - 2 \sum_{q=1}^3 \langle AU_q, U_q \rangle ]\,\xi 
-\, 4c\, \sum_{q=1}^3 S_qAU_q - 2(3c + f_2 )\, \sigma (\xi, \xi )\nonumber\\
&& \quad\;  - 2 \sigma (\xi, \nabla f) +  2c\, (n+4) \sum _i \sigma (e_i, e_i ) + 2 \sum_i \sigma (Ae_i, Ae_i).  
\end{eqnarray} 

Combining formulas (\ref{eq21}) - (\ref{eq24}) we finally obtain 

\begin{eqnarray}\label{eq25} \Delta^2 \tilde x &=& - [\Delta f + ff_ 2 + c( 3n + 7)f - 4c \sum_{q=1}^3 \langle AU_q, U_q \rangle ]\, \xi 
- f\, \nabla f - 2 A (\nabla f) \nonumber \\
&&\quad + \, 4c\, \sum_{q=1}^3 S_qAU_q  + ( 6c + 2 f_2 + f^2 )\, \sigma (\xi, \xi ) + 4\, \sigma (\nabla f , \xi )\\ 
&&\quad - \, 2 c(n+4) \sum_i \sigma (e_i, e_i ) - 2 f\,\sum_i \sigma (e_i, Ae_i ) -  2 \sum_i\sigma (Ae_i, Ae_i ) , \nonumber
\end{eqnarray} 
which holds for any real hypersurface of $\mathbb HQ^m(4c)$.
Compare this with a similar formula in \cite{Dim4}, \cite{Dim5} and formula (2.9) of \cite{GR}.
\vskip 0.6truecm

\section{\bf Curvature-Adapted Hypersurfaces of 2-Type With Constant Principal Curvatures }
\vskip 0.6truecm

 In this section we work with 2-type curvature adapted hypersurfaces of $\mathbb HQ^m$ that have constant principal curvatures.
As a matter of fact, it was proved in \cite{Ber}  that a curvature-adapted hypersurface of $\mathbb HP^m$ necessarily has constant principal curvatures
without any additional assumptions, but that is not yet known for hypersurfaces of $\mathbb HH^m.$
Assume that a general hypersurface $M^n \subset \mathbb HQ^m , \; n = 4m-1,$ is of Chen-type 2 via the embedding $\Phi$, i.e. 
$\widetilde x = \widetilde x_0 + \widetilde x_u + \widetilde x_v$ according to (\ref{eq1}). Then by taking succesive Laplacians and eliminating 
$\widetilde x_u , \widetilde x_v$ we get
\begin{equation}\label{eq26}
\Delta^2 \widetilde x - a \Delta \widetilde x + b \widetilde x = b \widetilde x_0 , \quad \text{with}\quad 
a := \lambda_u + \lambda_v, \;\; b := \lambda_u \lambda_v
\end{equation}
Conversely, (\ref{eq26}) implies that $M^n$ is of type $\leq 2 $ when $M$ is compact or when the trinomial $P(t) = t^2 - a t + b$ has two 
distinct real roots \cite{CP}. Denote the vector field along $\widetilde x (M)$ represented by the left-hand side of (\ref{eq26}) by $L.$ 
Then, differentiating $L$ with respect to an arbitrary tangent vector field $X$ and taking the inner product with $\widetilde x,$ 
using (\ref{eq10}), (\ref{eq20}) and (\ref{eq25}), gives
\begin{eqnarray*}
 0 &=& \langle \widetilde\nabla_X L, \widetilde x \rangle = X \langle L, \widetilde x \rangle - \langle L, X \rangle \\
 &=& X (f^2 - 6c + 2cn (n+4) - an + bc/2) + \langle f \nabla f + 2 A (\nabla f), X \rangle - 4c \sum_{q = 1}^3 \langle S_q A U_q, X \rangle ,
\end{eqnarray*}
from where we obtain
\begin{equation}\label{eq27}
2 A (\nabla f) + 3 f \nabla f = 4c \sum_{q=1}^3  S_q A U_q .
\end{equation}
 Consequently, (\ref{eq27}) yields 
\begin{equation}\label{eq28}
A (\nabla f ) = - \frac{3f}{2} \nabla f ,
\end{equation}
for a 2-type curvature-adapted hypersurface. Thus, on an open (possibly empty) set $\{ \nabla f \neq 0 \}$ the gradient of $f$ is a principal direction. Other conditions can be obtained by considering various components of differentiated 2-type equation (\ref{eq26}), but the expressions are rather complicated compared to $\mathbb CQ^m (4c)$ setting. Although those kinds of computations were instrumental in proving that a 2-type Hopf hypersurface of $\mathbb CQ^m$ has constant mean curvature and, moreover, constant principal curvatures, those conclusions do not follow in a similar way in quaternionic setting. Hence, for the rest of this paper, we consider now curvature adapted hypersurfaces of type 2 that have constant principal curvatures. This is not any restriction in $\mathbb HP^m$ case, since a curvature-adapted hypersurface of $\mathbb HP^m$ has (locally) constant principal curvatures and full classification of hypersurfaces with constant principal curvatures in both $\mathbb HP^m$ and $\mathbb HH^m$ is known \cite{Ber}.
The information on these model curvature-adapted hypersurfaces of $\mathbb HQ^m(4c)$  with constant principal curvaturess in terms of principal curvatures $\mu, \nu, \, \alpha_i,$ \, and their multiplicities $m(\mu), m (\nu), \, m(\alpha_i), \; i = 1, 2 $,  is presented in  Table 1 below taken from \cite{Ber}. The left portion of the table corresponds to the projective case and the right portion to the hyperbolic one. Here
$\mu$ and $\nu $ (resp. $\alpha_1$ and $\alpha_2$) are principal curvatures of $A|_{\mathcal D}$ (resp. $A|_{\mathcal D^{\perp}} $).
\par
These hypersurfaces are usually categorized as hypersurfaces of class (type) $A$ and class (type) $B$. Class-$A$ hypersurfaces are $P_1^k(r)$ in $\mathbb HP^m(4)$ which form a family of tubes of some radius $r \in (0, \pi/2)$ about a canonically embedded $\mathbb HP^k(4)$, and $H_1^k(r)$ in $\mathbb HH^m(- 4)$, which form a family of tubes of some radius $r > 0$ around a canonically embedded $\mathbb HH^k(- 4)$ for some $k \in \{0, 1, \dots , m - 1  \}$. A particular subclass $A_1$ is obtained when $k = 0$, producing a family of geodesic hyperspheres $P_1^0(r)$ and $H_1^0(r)$, or when $k = m - 1$ in the hyperbolic case, producing a family of equidistant hypersurfaces to canonically imbedded (totally geodesic) hyperplane $\mathbb HH^{m - 1}(- 4)$. Class-$A_2$ hypersurfaces are all the other tubes $P_1^k(r), \, H_1^k(r), \; k \neq 0,\, m - 1. $ The horosphere $H_3$ in $\mathbb HH^m(- 4)$ is of class $A_0$. Class-$B$ hypersurfaces are $P_2(r)$ in $\mathbb HP^m(4)$ which are tubes of some radius $r \in (0, \pi/4)$ about a canonically embedded complex projective space $\mathbb CP^m(4) $ of half the dimension and $H_2(r)$ in $\mathbb HH^m(- 4)$ which are tubes of some radius $r > 0$ about a canonically embedded complex hyperbolic space $\mathbb CH^m(- 4)$.
\vskip 0.3truecm
In a neighborhood of every point of some open dense set $\mathcal W$ of curvature-adapted hypersurface $M^n$ according to Lemma 3.6 of 
\cite{Ber} we can choose a local canonical 
basis $\{J_1, J_2, J_3 \}$ of quaternionic bundle $\mathcal J $ over some open neighborhood $\overline{G}$ of $\mathbb H Q^m$  and a local unit normal field of $M$ defined on $G: = \overline{G} \cap M \subset \mathcal W$ so that $U_q: = - J_q \xi ,  \, q = 1, 2, 3 $ are principal on $G$. Let $\alpha_q$ be 
the corresponding (locally constant) principal curvatures, i.e. $A (U_q) = \alpha_q U_q$ and let $\frak s (\mathcal D)$  and $\frak s (\mathcal D^\perp)$ denote the spectrum (collection of eigenvalues) of $A|_{\mathcal D}$ and $A|_{\mathcal D^\perp}$, respectively. 
Formula (\ref{eq25}) becomes
\begin{eqnarray}\label{eq29} \Delta^2 \widetilde x &=&  \Big\{ 4c \sum_{q=1}^3 \alpha_q - f\, [ f_ 2 + c( 3n + 7)] \Big\} \, \xi + 
( 6c + 2 f_2 + f^2 )\, \sigma (\xi, \xi ) \nonumber\\ 
&&\quad\;\; - 2 c(n+4) \sum_i \sigma (e_i, e_i ) - 2 f\,\sum_i \sigma (e_i, Ae_i ) -  2 \sum_i\sigma (Ae_i, Ae_i ) .
\end{eqnarray} 

Let $X \in \Gamma (TM).$ Using formulas (\ref{eq9}), (\ref{eq11}), (\ref{eq12}), (\ref{eq14}) and the fact that  $\sigma$ is parallel, we get from (\ref{eq20})
\begin{equation}\label{eq30} \widetilde\nabla_X (\Delta\widetilde x ) = 2c (n + 4) X + f AX - 2 c \sum_q \la X, U_q \ra U_q - f \s (X, \xi ) - 2 \s (AX, \xi ), 
\end{equation}
\noindent
where $X_{\mathcal D^{\perp}}:= \sum_q \la X, U_q \ra U_q $ is the component of the tangent vector $X$ which belongs to $\mathcal D^\perp .$
Likewise, from (\ref{eq29}) we have

\begin{eqnarray}\label{eq31} \widetilde\nabla_X (\lp^2 \widetilde x ) &=& \, [2c f^2 + 4 (n^2 + 8n + 13)] X 
- \Big\{ 4c \sum_q \alpha_q - f [f_2 + c (3n + 11)]\Big\}\, AX + 4c A^2 X  \nonumber\\
&&\quad\;\; - 2c\, [2 f_2 + f^2 + 2c (n + 7)]\, X_{\mathcal D^\perp} - 4cf \sum_q J_q AS_q X - 4c \sum_q J_q A^2 S_q X \nonumber\\  
&&\quad\;\; + \Big\{ 4c \sum_q \alpha_q - f [f_2 + c (3n + 7)]\Big\}\, \s (X, \xi )- 2\, [2f_2 + f^2 + 2c (n + 7)]\, \s (AX , \xi) \\
&&\quad\;\; - 4 f \s (A^2X, \xi ) - 4 \s (A^3X, \xi ) - 2 f \sum_i \s ((\nabla_XA)e_i , e_i) -  4 \sum_i \s ((\nabla_XA)e_i , Ae_i).\nonumber
\end{eqnarray}
\begin{table}
 \begin{center}
  \begin{tabular}[b]{|| c || c | c || c | c | c ||}
 \hline
            & $P_1^k(r)$    & $P_2 (r)$       & $H_1^k(r)$      & $H_2(r)$        & $H_3$ \\ \hline
 $\mu$    &  $\cot r $    &  $\cot r $      & $\coth r $      &  $\coth r $     &  1    \\ 
 $\nu$    &  $- \tan r$   &  $- \tan r$     & $\tanh r $      & $\tanh r$       & ---    \\
 $\alpha_1$ & $2 \cot (2r)$ & $2 \cot (2r) $  &  $2 \coth (2r)$ & $2 \coth (2r)$  &  2  \\
 $\alpha_2$ &  ---          & $- 2 \tan (2r)$ &  ---            &  $2 \tanh (2r)$ &  ---   \\ \hline
 $m(\mu)$ & $4(m-k-1)$ &  $2(m-1) $ &  $4 (m-k-1)$ & $2(m-1)$  &  $4(m-1)$ \\
 $m(\nu)$ &  $4k$  &  $2(m-1)$  &  $4k$  & $2(m-1)$ & ---  \\
 $m(\alpha_1)$ &  3  & 1  & 3  &  1  &  3  \\
 $m(\alpha_2)$ &  ---  &  2  &  ---   &  2  &  ---  \\ \hline
   \end{tabular}
  \end{center}
 \caption{Principal Curvatures of Model Hypersurfaces and Their Multiplicities }
\end{table}

Differentiating (\ref{eq26}) with respect to $X$ we get
\begin{equation}\label{eq32} \widetilde\nabla_X (\Delta^2 \widetilde x) - a \, \widetilde\nabla_X (\lp \widetilde x) + b X = 0.
\end{equation}
Therefore, using the above expressions and separating the part of (\ref{eq32}) which is tangent to $\mathbb HQ^m$ we get
\begin{eqnarray} \label{eq33} && [ 2c f^2 + 4 (n^2 + 8n + 13)  - 2c (n + 4) a + b\,] \, X + 4c\, A^2X \nonumber \\
&& - \Big\{ 4c \sum_q \alpha_q - f [f_2 + c (3n + 11) - a]\Big \}\, AX - 2c\, [2f_2 + f^2 + 2c (n + 7) - a ]\, X_{\mathcal D^\perp }\nonumber\\ 
&& - 4cf \sum_q S_q A S_q X  - 4c \sum_q S_q A^2 S_qX = 0 ,     
\end{eqnarray}
whereas the part normal to $\mathbb HQ^m$ yields
\begin{eqnarray} \label{eq34} && \Big\{ 4c\sum_q \alpha_q  - f [f_2  + c (3n + 7) - a]\Big\}\,\, \s (X, \xi) - 4 \s (A^3X, \xi )\nonumber\\
&& - 4f \s (A^2X, \xi ) - 2\, [2f_2 + f^2 + 2c (n + 7) - a] \, \s (AX, \xi ) \\
&& - 2 f \sum_i \s ((\nabla_XA) e_i , e_i) - 4 \sum_i \s ((\nabla_XA)e_i , Ae_i) = 0. \nonumber
\end{eqnarray}

Note that $J_q A^k S_q X $ has no $\xi -$component since $U_q$ is principal and $S_qU_q = 0.$ 
 
These expressions are linear in $X.$ Further separation of parts relative to the splitting
$\mathcal D\oplus \mathcal D^\perp $ of the tangent space yields the following

\begin{lemma}\label{Lem1} Let $M^n$ be a connected, curvature-adapted real hypersurface of 
$\mathbb HQ^m(4c)$ $ \, (m \geq 2, \; n = 2m-1).$ Suppose $M$ has constant principal curvatures and $\alpha_q^2 + 4 c \neq 0,$ for all $q =1, 2, 3, $\, $\alpha_q \in \mathfrak s (\mathcal D^\perp)$. If $M$ is of 2-type via $\widetilde x$ 
satisfying a 2-type condition (\ref{eq26}) then the following relations hold:
\begin{enumerate}
\item [\it{($C_1$)}] \,
\begin{eqnarray*} [2c (n + 3) + \alpha_k f]\, a &=&  b + 4 (n + 1)(n + 6) + \, \alpha_k\,\Big\{ f\, [f_2 + c (3n + 7)] - 4 c \sum_q \alpha_q \Big\} \\
                                                  &&\;\;\; - 4c f_2 + 4c \Big[ f \, \sum_q \alpha_q + \sum_q \alpha_q^2 \Big], \end{eqnarray*}
for every $\alpha_k \in \mathfrak s (\mathcal D^\perp), \,  k = 1, 2, 3;$

\item [\it{($C_2$)}] \, \begin{eqnarray*} [2c (n + 4) + \tau f]\, a &=& \;\; b + 4 (n^2 + 8n + 13)  + 4c\, \Big\{ f\Big( \tau + \sum_q \tau_q \Big) +  \Big( \tau^2 +  \sum_q {\tau_q}^2 \Big)\Big\} \\
                                                     && \;\;\; + 2cf^2 + \Big\{ f \, [f_2 + c (3n + 7)] - 4c \sum_q\alpha_q \Big\} \tau ,
\end{eqnarray*}
for any principal curvature $\tau \in \mathfrak s (\mathcal D) ;$

\item [\it{($C_3$)}] \, \begin{eqnarray*} (f + 2 \tau )\, a &=& f [f_2 + c (3n + 7)] - 4c\sum_q \alpha_q + 4 \tau\, \Big[f \Big( \tau +   \sum_q \tau _q \Big) + \Big( \tau^2 + \sum_q {\tau _q}^2\Big )\Big] \\
  && \;\;\; + \, 2 \tau \, \Big[ 2 f_2 + f^2 + 2c (n + 7) - 2f \sum_q\alpha_q - 2 \sum_q \alpha_q^2\Big] ,
\end{eqnarray*}
for any $\tau \in \mathfrak s (\mathcal D) ;$

\item [\it{($C_4$)}] \, \begin{eqnarray*} && f \la (\nabla_XA)Y , Z \ra + f \sum_q \la (\nabla_XA)(S_qY) , S_qZ \ra \\
&&\quad + \; \la (\nabla_XA^2)Y , Z \ra + \sum_q \la (\nabla_XA^2)(S_qY) , S_qZ \ra = 0, \end{eqnarray*}
for every $Y, Z \in \Gamma (\mathcal D)$ and $X \in \Gamma(TM)$.
\end{enumerate}
Conversely, if $(C_1) - (C_4)$ hold for a curvature-adapted hypersurface of $\mathbb HQ^m$ with constant principal curvatures, 
where $a$ and $b$ are constants and $\tau \in \mathfrak s (\mathcal D)$ is an arbitrary principal curvature on $\mathcal D,$ then the 
formula (\ref{eq26}) holds and the submanifold is of finite type  $\leq 2$ if the corresponding monic polynomial 
$P(t) = t^2 - a t + b $ has two distinct real roots.
\end{lemma}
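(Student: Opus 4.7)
The plan is to obtain $(C_1)$--$(C_4)$ by extracting scalar relations from the tangential identity (\ref{eq33}) and the normal identity (\ref{eq34}) already derived from differentiating the 2-type equation (\ref{eq26}). The overarching observation is that because $M$ is curvature-adapted with constant principal curvatures, one may choose the local canonical basis so that $AU_q=\alpha_q U_q$, and the decomposition recalled in Table~1 shows that the eigenspaces of $A|_{\mathcal D}$ are each $S_q$-invariant modulo simultaneous principal decompositions. Consequently, for any principal $X\in\mathcal D$ with $AX=\tau X$, the vector $S_qX$ is again principal with some eigenvalue $\tau_q\in\mathfrak{s}(\mathcal D)$, and the sums $\sum_q S_q A S_q X$ and $\sum_q S_q A^2 S_q X$ evaluate, via $S_q^2X=-X$ from (\ref{eq14}), to $-(\sum_q\tau_q)X$ and $-(\sum_q\tau_q^2)X$, respectively.

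Conditions $(C_1)$ and $(C_2)$ come from the tangential identity (\ref{eq33}). For $(C_1)$, I substitute $X=U_k$: by (\ref{eq13}), $S_qU_k=0$ if $q=k$ and $\pm U_\ell$ otherwise, which makes $\sum_q S_q A^j S_q U_k$ collapse to $-\bigl(\sum_{q\ne k}\alpha_q^j\bigr)U_k$ for $j=1,2$; collecting the coefficient of $U_k$ and isolating the $a$-terms yields $(C_1)$. For $(C_2)$, I pick $X\in\mathcal D$ a unit principal vector, note $X_{\mathcal D^\perp}=0$, and take the inner product of (\ref{eq33}) with $X$; after using the $\tau_q$-formulas above and rearranging, $(C_2)$ drops out.

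Conditions $(C_3)$ and $(C_4)$ come from the normal identity (\ref{eq34}). For $(C_3)$, I take the same principal $X\in\mathcal D$ and pair (\ref{eq34}) with $\sigma(X,\xi)$, using (\ref{eq8}) to compute $\langle\sigma(A^jX,\xi),\sigma(X,\xi)\rangle=c\tau^j$. The two derivative sums $\sum_i\sigma((\nabla_XA)e_i,\,\cdot\,)$ are opened via (\ref{eq8}); they reduce to bilinear expressions of the form $\sum_q\langle(\nabla_XA)U_q,S_qX\rangle$. A single use of the Codazzi equation (\ref{eq16}) then replaces $(\nabla_XA)U_q$ by $(\nabla_{U_q}A)X$ modulo an explicit $\mathcal D^\perp$-correction, and upon using $AS_qX=\tau_qS_qX$ and the symmetry of $\nabla A$, all terms reassemble into $(C_3)$. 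For $(C_4)$, I pair (\ref{eq34}) with $\sigma(Y,Z)$ for arbitrary $Y,Z\in\Gamma(\mathcal D)$; formula (\ref{eq8}) combined with $Y,Z\perp U_q$ shows that $\langle\sigma(V,\xi),\sigma(Y,Z)\rangle=0$ for every $V\in\Gamma(TM)$, which annihilates every polynomial-in-$A$ term of (\ref{eq34}). Expanding the remaining two $\sigma\bigl((\nabla_XA)e_i,\cdot\bigr)$ sums against $\sigma(Y,Z)$ via (\ref{eq8}) then produces precisely the left-hand side of $(C_4)$.

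For the converse, if $(C_1)$--$(C_4)$ hold, running the derivations backwards restores both the tangential and normal identities in every direction $X$, hence $\widetilde\nabla_X\bigl(\Delta^2\widetilde x-a\Delta\widetilde x+b\widetilde x\bigr)=0$. Thus $L:=\Delta^2\widetilde x-a\Delta\widetilde x+b\widetilde x$ is a constant vector, which must equal $b\widetilde x_0$ for some constant $\widetilde x_0\in E^N_{(K)}$, giving (\ref{eq26}); the standard argument of \cite{CP} then yields the 2-type decomposition whenever $P(t)=t^2-at+b$ has two distinct real roots. The principal obstacle I anticipate is the bookkeeping in $(C_3)$ and $(C_4)$, where the $(\nabla_XA)$-sums in (\ref{eq34}) must be contracted against $\sigma$-terms via (\ref{eq8}) while tracking the interplay between the skew-adjoint $S_q$'s, the splitting $\mathcal D\oplus\mathcal D^\perp$, and the Codazzi relations; the hypothesis $\alpha_q^2+4c\ne 0$ enters here to guarantee nondegeneracy of the coefficients that multiply the various principal components.
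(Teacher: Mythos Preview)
Your overall scheme matches the paper's: $(C_1)$ and $(C_2)$ come from (\ref{eq33}) with $X=U_k$ and $X\in V_\tau\subset\mathcal D$, while $(C_3)$ and $(C_4)$ come from pairing (\ref{eq34}) against $\sigma(\xi,Y)$ and $\sigma(Y,Z)$ for $Y,Z\in\Gamma(\mathcal D)$. Two points need correction. First, in your derivation of $(C_3)$ you correctly reduce the derivative terms to $\sum_q\langle(\nabla_XA)U_q,S_qX\rangle$, but your proposal to use Codazzi to swap $(\nabla_XA)U_q$ for $(\nabla_{U_q}A)X$ does not close the computation: one obtains $(\tau-\tau_q)\langle\nabla_{U_q}X,S_qX\rangle$, and the quantity $\langle\nabla_{U_q}X,S_qX\rangle$ is \emph{not} determined by the ingredients you list (the skew-symmetry identities yield only $0=0$). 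The paper instead uses (\ref{eq15}) directly: since $\alpha_q$ is constant, $(\nabla_XA)U_q=(\alpha_qI-A)\nabla_XU_q=(\alpha_qI-A)\bigl(S_qAX+\text{terms in }\mathcal D^\perp\bigr)$, whence $\langle(\nabla_XA)U_q,S_qY\rangle=\alpha_q\langle AX,Y\rangle+\langle S_qAS_qAX,Y\rangle$ for $Y\in\mathcal D$, which gives (\ref{eq38})--(\ref{eq39}) and then $(C_3)$. Second, the hypothesis $\alpha_q^2+4c\ne0$ is not about ``nondegeneracy of coefficients''; it is the hypothesis of Berndt's Lemma~4.9 (see (\ref{eq35})) that guarantees $S_qV_\tau=V_{\tau_q}$ for a \emph{well-defined} $\tau_q\in\mathfrak s(\mathcal D)$, without which the expressions $\sum_q\tau_q$ and $\sum_q\tau_q^2$ in $(C_2)$, $(C_3)$ have no meaning.

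For the converse, ``running the derivations backwards'' is too quick. One must show that $(C_1)$--$(C_4)$ exhaust the independent scalar components of (\ref{eq33})--(\ref{eq34}). The paper does this explicitly by recalling that the normal space of $\mathbb HQ^m$ in $E^N_{(K)}$ is spanned by $\widetilde x$ together with values of $\sigma$ on pairs from $\{\xi\}\cup\mathcal D\cup\mathcal D^\perp$, and then checking that the $\widetilde x$-, $\sigma(\xi,\xi)$-, $\sigma(\xi,\mathcal D^\perp)$-, $\sigma(\mathcal D,\mathcal D^\perp)$-, and $\sigma(\mathcal D^\perp,\mathcal D^\perp)$-components of (\ref{eq34}) either vanish identically or reduce via (\ref{eq10}) to components already accounted for (e.g.\ $\sigma(U_q,Y)=\sigma(\xi,J_qY)$ and $\sigma(U_j,U_k)$ equals $\sigma(\xi,\xi)$ or $0$). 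Only after this exhaustion is the equivalence of $(C_1)$--$(C_4)$ with (\ref{eq32}) established, and then (\ref{eq26}) follows by integration along $M$.
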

The condition $\alpha_q^2 + 4 c \neq 0$ for all $q = 1, 2, 3$ is one case considered by Berndt in his analysis, under which for every $\tau \in \mathfrak s_p(\mathcal D)$ at every 
point $p$ and for every $q = 1, 2, 3,$ there exists exactly one $\tau_q \in \mathfrak s_p(\mathcal D)$ such that $S_q V_{\tau} = V_{\tau_q}$
and $\tau_q$ satisfies 
\begin{equation} \label{eq35} [2 \tau - \alpha_q]\, [2 \tau_q - \alpha_q] = \alpha_q^2 + 4c,
\end{equation}
 according to Lemma 4.9  of \cite{Ber}. Those are the $\tau_q$'s appearing in conditions $(C_2)$ and $(C_3)$. The complementary case $\alpha_q^2 + 4 c = 0$ holding for some $q = 1, 2, 3$ leads to $\alpha_1 = \alpha_2 = \alpha_3 =: \alpha$. Since it is either proved or assumed in \cite{Ber} that the principal curvatures are constant, we find from Table 1 that the only hypersurface with constant principal curvatures for which $\alpha_q^2 + 4 c = 0$ holds is the horosphere $H_3$, which is not of 2-type as will be proved in Lemma 2 below and this case can be excluded from further discussion.

\begin{proof}
Assume that the cannonical triple of quaternionc sections $J_1, J_2, J_3$ has been chosen on an open neighborhood $G_p$ of an arbitrary point $p$ of the dense set $\mathcal W \subset M$, so that $U_q, \, q = 1, 2, 3$, are principal directions. This will be the standing assumption in all subsequent calculations. By linearity of the above formulas in $X$ it suffices to consider various components of (\ref{eq33}) and (\ref{eq34}) when $X$ belongs to
all possible eigenspaces of $A.$ Note first that by (\ref{eq13}) we have
\begin{equation}\label{eq36}
\sum_q S_q A S_q U_k = \Big(\alpha_k - \sum_q \alpha_q \Big) U_k, \quad \sum_q S_q A^2 S_q U_k = \Big(\alpha_k^2 - \sum_q \alpha_q^2 \Big) U_k, \;\;  k = 1, 2, 3.
\end{equation}
Then ($C_1$) follows from (\ref{eq33}) when we set $X = U_k$ for $k = 1, 2, 3$, and use (\ref{eq36}). Next, consider $X \in V_\tau$ to be any principal direction of $A|_{\mathcal D}$ for a principal curvature 
$\tau \in \mathfrak s (\mathcal D).$ 
Since in this case $A S_q X  = \tau_q \, S_q X$ we get
\begin{equation}\label{eq37}
\sum_q S_q A S_q X = - \Big(\sum_q \tau_q\Big) X, \quad \sum_q S_q A^2 S_q X = - \Big(\sum_q \tau_q ^2\Big) X.
\end{equation}
Taking these into consideration, choose such an $X$ in (\ref{eq33}) to get ($C_2$).
Regarding the components that come from the part normal to $\mathbb HQ^m$, we recal that the normal space of $\mathbb HQ^m$ in 
$\bold H^{(1)}(m+1) = E^N_{(K)}$ is spanned by $\widetilde x $ and the values of $\s $ on various pairs of tangent vectors of $\mathbb HQ^m,$ namely by
$\s (\xi , \xi), \,  $ $\, \s (\xi , X),$ $\s (\xi , W),$ \, $\s (X, Y), \, \s (X, W), $ and $\s (V, W)$ for $X, Y \in \Gamma (\mathcal D), $ 
$V, W \in \Gamma (\mathcal D^{\perp} )$ \cite{Dim2}. Equation (\ref{eq34}) and hence equation (\ref{eq32}) have no $\widetilde x $-component since 
(\ref{eq10}) holds and $f$ and $f_2$ are constant. Note also that by (\ref{eq10})
$\s (\xi , U_q) = 0, \; \s (U_q, U_q) = \s (\xi , \xi)$ and $\s (U_q, X) = \s (\xi , J_qX)$ for $X \in \mathcal D.$ 
\par
Conditions ($C_3$) and ($C_4$) follow from the normal part (\ref{eq34}). Namely, let $\mathcal L$ denote the left-hand side of (\ref{eq34}). We consider the metric product of $\mathcal L$ with normal vectors of the above form.

By using (\ref{eq13}) - (\ref{eq15}),  the fact that $f$, $f_2$ are constant, and $U_q, q = 1, 2, 3$ are principal, it is checked in a straightforward manner that $\langle \mathcal L, \, \sigma(\xi , \xi ) \rangle = 0 $ so $\sigma (\xi , \xi)$-component
gives no additional information. By (\ref{eq10}) and symmetry of $\sigma$ we have
$$ \sigma (\xi , U_q) = \sigma (J_q\xi , J_qU_q) = - \sigma (\xi , U_q) = 0,
$$
thus by linearity $\sigma (\xi , W) = 0$ for every $W \in \Gamma (\mathcal D^{\perp })$ and there is no need to consider components of this form.

Now take $Y \in \Gamma (\mathcal D).$ We compute $\la \s (\xi , X ), \s (\xi , Y )  \ra  = c \la X, Y\ra$ and from (\ref{eq8}) and (\ref{eq15})
\begin{equation} \label{eq38}
\sum_i \la  \s (\xi , Y ), \s ((\nabla_XA)e_i, e_i) \ra  = -2c\,  \Big(\sum_q \alpha_q \Big)\la AX, Y\ra\, - 2 c\, \sum_q \la S_qAS_qAX, Y \ra ,
\end{equation}

\begin{equation}\label{eq39} 
\sum_i \langle \sigma (\xi , Y ) , \sigma ((\nabla_XA)e_i, Ae_i) \rangle  = - c\, \Big(\sum_q \alpha_q^2 \Big)\langle AX, Y\rangle\, -  c\, \sum_q \langle S_qA^2S_qAX, Y \rangle .
\end{equation}
Therefore, the inner product of (\ref{eq34}) with $\s (\xi , Y )$ gives
\begin{eqnarray*} &&4 \sum_q \la S_qA^2S_qAX, Y \ra + 4f \sum_q\la S_qAS_qAX , Y \ra - 4 \la A^3X, Y \ra - 4f \la A^2X, Y\ra\\
&& -2 \, \Big[2 f_2 + f^2 + 2c (n + 7) - a - 2f \sum_q \alpha_q - 2 \sum_q \alpha_q^2\Big] \la AX, Y\ra\\
&& + \Big\{4c \sum_q \alpha_q + f a - f\, [f_2 + c (3n + 7)] \Big\} \la X, Y\ra = 0.
\end{eqnarray*}
Since  $A\mathcal D \subset \mathcal D, \; S_q \mathcal D = \mathcal D,$ and  the expression
is linear in $X, Y \in \mathcal D$ we can drop $Y$ and take $X \in V_\tau \subset \mathcal D \,$ for any $\tau \in \mathfrak s (\mathcal D)$
using $AX = \tau X , \; A(S_qX) = \tau_q S_qX, $ with $\tau , \tau _q$ related by (\ref{eq35}), to get ($C_3$). 
\vskip 0.3truecm
Finally, to prove ($C_4$), first observe that there is no need to consider $ \s (W, Y)$-component of $\mathcal L$ for $W \in \mathcal D^{\perp}, \, Y \in \mathcal D$ since 
$W = \sum_q c_q U_q $ for some scalars $c_q$ and thus
$$ \s (W, Y) = \sum_q c_q \s (U_q, Y) = \sum_q c_q \s (\xi , J_qY),
$$
with $J_qY$ belonging also to $\Gamma (\mathcal D)$ and we alredy considered components of the form $\s (\xi , \mathcal D ).$ We also know 
that $\s (W, \xi ) = 0.$ Moreover, for $V, W \in \mathcal D^{\perp}$ we have $V = \sum_j c_j U_j, \; W = \sum_k d_k U_k, \; j, k = 1, 2, 3$ and
$$ \s (V, W) = \sum_{k=1}^3 c_k d_k \s (U_k, U_k) + \sum_{k=1}^3\sum_{j\neq k} c_j d_k \s (U_j, U_k) = \Big(\sum_{k=1}^3 c_k d_k\Big) \s (\xi , \xi),
$$
the component already considered. Therefore, there remain only components of the form $\s (Y, Z)$, \, $Y$, $Z\in \Gamma (\mathcal D)$ to be 
considered. Since $Y, Z \in \mathcal D $, by (\ref{eq8}) we have $\la \s (X, \xi ), \s (Y, Z)\ra = 0$ for any $X \in \Gamma (TM).$ Further, since $f$ and $f_2$ are constant
$$\sum_i \la \s ((\nabla_XA)e_i ,\, e_i), \s (Y, Z) \ra = 2c \,\la (\nabla_XA)Y,\, Z \ra  + 2c\, \sum_q \la (\nabla_XA)(S_qY), S_qZ \ra $$
$$\sum_i \la \s ((\nabla_XA)e_i ,\, A e_i), \s (Y, Z) \ra = c \,\la (\nabla_XA^2)Y,\, Z \ra  + c\, \sum_q \la (\nabla_XA^2)(S_qY), S_qZ \ra .$$

As the last condition, for the component in $\sigma(Y, Z)$ direction we look at  $\langle \mathcal L, \sigma (Y , Z) \rangle = 0$, where $\mathcal L$ is as above with $X \in \Gamma (TM)$ and $Y, Z \in \Gamma (\mathcal D)$ Using the preceding calculations and (\ref{eq8}),  this is easily shown to be equivalent to ($C_4$).
\par
Conversely, since we considered all possible components, the conditions ($C_1$) - ($C_4$) are equivalent to (\ref{eq33}) and (\ref{eq34}) 
by linearity and thus we get (\ref{eq32}), from which it follows that a hypersurface is of type $\leq 2,$ provided that the 
corresponding polynomial has two distinct real roots.
\end{proof}

Note that by the results of Pak \cite{Pak} for the projective case and Lyu, P\'erez and Suh \cite{LPS} for the hyperbolic case (see also \cite{AM2}), any of the class$-A$ hypersurfaces in $\mathbb HQ^m $ from either list  is
characterized by
\begin{equation}\label{eq40} (\nabla_XA)Y = - \, c\, \sum_{q = 1}^3\, [\la S_q X, Y \ra U_q + \la U_q, Y \ra S_qX], \quad X, Y \in \Gamma (TM),
\end{equation}
so that the condition ($C_4$) is trivially
satisfied for those hypersurfaces. Further, by eliminating $b$ from ($C_1$) and ($C_2$) we get
\begin{eqnarray}\label{eq41} [2c +  (\tau - \alpha_k) f ]\, a &=& \;  4(n + 7) + 2 c f^2 + 4 c f_2  + 4c \,\Big\{f \Big( \tau +  \sum_q \tau_q\Big) + \Big( \tau^2 + \sum_q \tau_q ^2 \Big)\Big\} \nonumber \\
&& + \Big\{ f\, [f_2 + c (3n + 7)] - 4 c \sum_q \alpha_q \Big\} \, ( \tau - \alpha_k) - 4 c \Big[f \sum_q \alpha_q + \sum_q \alpha_q ^2  \Big], 
\end{eqnarray} 
for any $k = 1, 2, 3 $ and $\tau \in\mathfrak s (\mathcal D)$. If $a$ can be uniquely determined from this condition (regardless of the choice of $\tau$ and $\alpha_k$ and  consistent with ($C_3$), then $b$ is
uniquely determined from ($C_1$).
\vskip 0.7truecm
\section{\bf The Main Results }
\vskip 0.6truecm
First, we show that a horosphere cannot be of 2-type and moreover

\begin{lemma}\label{lem2} 
The horosphere $H_3$ in $\mathbb HH^m$ satisfies $\Delta^2\widetilde x = \text{const} \neq 0$ and is not of any finite type via $\widetilde x$. 
\end{lemma}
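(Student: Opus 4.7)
The plan is to evaluate $\Delta^2\widetilde x$ explicitly on $H_3$ using (\ref{eq29}), show the result is a fixed nonzero vector in $E^N_{(K)}$, and then rule out finite type by a spectral argument.

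For the computation, Table~1 supplies the horosphere data $c=-1$, $n=4m-1$, $\mu=1$ with multiplicity $4(m-1)$ on $\mathcal D$, and $\alpha_1=\alpha_2=\alpha_3=2$ on $\mathcal D^\perp$, giving $f=4m+2$, $f_2=4m+8$, $\sum_q\alpha_q=6$, $\sum_q\alpha_q^2=12$. Choosing an adapted orthonormal frame built from a basis of $V_\mu$ together with $U_1,U_2,U_3$ and using $\sigma(U_q,U_q)=\sigma(\xi,\xi)$ from (\ref{eq10}), one obtains $\sum_i\sigma(e_i,Ae_i)=\sum_i\sigma(e_i,e_i)+3\sigma(\xi,\xi)$ and $\sum_i\sigma(Ae_i,Ae_i)=\sum_i\sigma(e_i,e_i)+9\sigma(\xi,\xi)$. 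Substituting in (\ref{eq29}), the coefficient of $\sum_i\sigma(e_i,e_i)$ cancels and the $\xi$ and $\sigma(\xi,\xi)$ coefficients simplify to
\begin{equation*}
\Delta^2\widetilde x \;=\; 16(m^2-1)\,\bigl[\,2\xi+\sigma(\xi,\xi)\,\bigr].
\end{equation*}

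Next I would show the bracketed vector $V:=2\xi+\sigma(\xi,\xi)$ is parallel along $M$ in the ambient space. Using Weingarten $\widetilde\nabla_X\xi=-AX+\sigma(X,\xi)$, the parallelism of $\sigma$, and formula (\ref{eq9}) specialized to $\overline A_{\sigma(\xi,\xi)}X=-2X-2\sum_q\la X,U_q\ra U_q$ (for $c=-1$), a direct computation yields
\begin{equation*}
\widetilde\nabla_X V \;=\; 2(I-A)X + 2\sum_q\la X,U_q\ra U_q + 2\,\sigma((I-A)X,\xi).
\end{equation*}
Checking this on the two eigenspaces of $A$: if $X\in V_\mu$ then $AX=X$ and $X\perp\mathcal D^\perp$, so every term vanishes; if $X=U_k$ then $(I-A)U_k=-U_k$ is cancelled by $\sum_q\la U_k,U_q\ra U_q=U_k$, and the third term becomes $-2\sigma(U_k,\xi)=0$, where $\sigma(\xi,U_q)=0$ follows from the invariance $\sigma(J_q\cdot,J_q\cdot)=\sigma(\cdot,\cdot)$ recorded in (\ref{eq10}). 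Connectedness of $H_3$ then forces $V$ to be a fixed vector in $E^N_{(K)}$, and since its $T\overline M$-tangential component is $2\xi\neq 0$, the vector $V$ is nonzero. Thus $\Delta^2\widetilde x$ equals a nonzero constant.

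For the last step, assume for contradiction that $\widetilde x=\widetilde x_0+\sum_{i=1}^k\widetilde x_{t_i}$ is a finite-type decomposition with distinct eigenvalues $\lambda_{t_i}$ and nonconstant components. Then $\Delta^2\widetilde x=\sum_i\lambda_{t_i}^2\widetilde x_{t_i}$, and applying $\Delta$ once more to the constant right-hand side yields $\sum_i\lambda_{t_i}^3\widetilde x_{t_i}=0$. Passing to individual matrix entries and using that scalar $\Delta_M$-eigenfunctions with distinct eigenvalues are linearly independent, each $\widetilde x_{t_i}$ with $\lambda_{t_i}\neq 0$ must vanish identically. Hence the only surviving contribution to $\Delta^2\widetilde x$ is zero, contradicting $\Delta^2\widetilde x\neq 0$. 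The main obstacle is the parallelism computation in the second step; once $V$ is shown to be constant, the remaining argument is essentially bookkeeping.
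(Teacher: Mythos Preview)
Your computation of $\Delta^2\widetilde x = 16(m^2-1)\,[\,2\xi + \sigma(\xi,\xi)\,]$ and the verification that $V=2\xi+\sigma(\xi,\xi)$ is parallel along $M$ are correct and coincide with the paper's argument (the paper writes the scalar factor as $n^2+2n-15=(n+5)(n-3)$, which equals $16(m^2-1)$ since $n=4m-1$). The checks on $V_\mu$ and on $U_k$, including the use of $\sigma(\xi,U_q)=0$, match exactly.

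Where you diverge is in the final step. The paper collapses the minimal-polynomial relation $P(\Delta)(\widetilde x-\widetilde x_0)=0$ using $\Delta^s\widetilde x=0$ for $s\geq 3$, obtaining a relation $C-a_1\Delta\widetilde x+a_0(\widetilde x-\widetilde x_0)=0$; it then differentiates in $X$, applies the explicit formula (\ref{eq30}) for $\widetilde\nabla_X(\Delta\widetilde x)$, and reads off contradictory values of $h=a_0/a_1$ on $\mathcal D$ and $\mathcal D^\perp$. Your route is purely spectral: from $\Delta(\Delta^2\widetilde x)=0$ you get $\sum_i\lambda_{t_i}^3\widetilde x_{t_i}=0$ and then use that eigenfunctions with distinct eigenvalues are linearly independent (true on any manifold by the pointwise Vandermonde argument, no compactness needed) to force every $\widetilde x_{t_i}$ with $\lambda_{t_i}\neq 0$ to vanish, whence $\Delta^2\widetilde x=\sum_i\lambda_{t_i}^2\widetilde x_{t_i}=0$, a contradiction. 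Your argument is shorter and more portable (it applies verbatim to any immersion with $\Delta^2\widetilde x$ a nonzero constant), while the paper's argument stays closer to the concrete geometry and reuses (\ref{eq30}). Both are valid; just be sure to note explicitly that the linear-independence step does not rely on $L^2$ orthogonality, since $H_3$ is noncompact.
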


\begin{proof}\,
From Table 1 we find the principal curvatures $\mu = 1, \; m(\mu) = n - 3$ and $ \alpha = 2, \; m(\alpha) = 3$ so that $f = n + 3, \; f_2 = n + 9$ and $\sum_q \alpha_q = 6$. Hence, from (\ref{eq29}) with $c = -1$ we find

\begin{align*}
\Delta^2\widetilde x \; = \; 2& (n^2 + 2n - 15)\, \xi + (n^2 + 8 n + 21)\, \sigma (\xi , \xi)\\
& + 2 (n + 4) \sum_{e_i \in \mathcal D}\sigma (e_i, e_i) - 2 (n + 3) \sum_{e_i \in \mathcal D}\sigma (e_i, e_i) - 2 \sum_{e_i \in \mathcal D}\sigma (e_i, e_i)\\
& + 2 (n + 4) \sum_q  \sigma (U_q, U_q) - 4 (n + 3) \, \sum_q  \sigma (U_q, U_q) - 8\, \sum_q  \sigma (U_q, U_q) \\
= \; \; & (n^2 + 2 n - 15)\, [\, 2 \xi +  \sigma (\xi , \xi) \,] \, ,
\end{align*}
since the terms on the second line cancel out and the terms on the third line combine with $\sigma(\xi ,\xi)$ term according to (\ref{eq10}). Hence, for $X \in \Gamma(TM)$ we have 
\begin{align*}
\widetilde\nabla_X(\Delta^2\widetilde x) &= (n + 5)(n - 3)\, [2 \sigma(X, \xi) - 2 AX - \overline A_{\sigma(\xi , \xi)} X - 2 \sigma (AX , \xi)] \\
&= \, 2 (n + 5)(n - 3)\, [\sigma (X, \xi) -  \sigma (AX, \xi) - AX + X + X_{\mathcal D^\perp }]
\end{align*}
When $X \in \Gamma(\mathcal D)$ then $AX = X$ and this expression is equal to zero,  whereas when $X \in \Gamma (\mathcal D^\perp)$ then $AX = 2 X$ so that
\begin{equation*}
\widetilde\nabla_X(\Delta^2\widetilde x) =  - 2\, (n + 5)(n - 3)\,  \sigma(X, \xi).
\end{equation*}

By (\ref{eq10}) and symmetry of $\sigma$ we have  $\sigma (U_q , \xi) = 0, \, q = 1, 2, 3, $ so that $\sigma (X, \xi) = 0 $ for $X \in \Gamma (\mathcal D^\perp)$, and hence, by linearity, valid for all tangent vectors $X$. Therefore, $\Delta^2\widetilde x =: C $ is a constant vector (matrix), i.e. $ 2 \xi + \sigma (\xi , \xi) = $ const and this constant is nonzero since $\sigma (\xi, \xi) \perp \xi $.
\par
If we assume that $H_3$ is of finite $k$-type via $\widetilde x$, then $P(\Delta)(\widetilde x - \widetilde x_0) = 0$ where $P$ is a monic polynomial of degree $k.$ Since $\Delta^s\widetilde x = 0$ for $s > 2$ we would have $C - a_1 \, \Delta\widetilde x + a_0 (\widetilde x - \widetilde x_0) = 0 $ for some constants $a_0, a_1$ where $a_1 \neq 0$ since $\widetilde x$ is not constant. Then $\widetilde\nabla_X(\Delta \widetilde x) - h X = 0$, for $X \in \Gamma (TM)$ where $h = a_0/a_1$. By (\ref{eq30}) this would imply
\begin{equation*}
- \, [2 (n + 4) +  h]\, X + 2 X_{\mathcal D^\perp} + (n + 3) AX - (n + 3)\sigma (X, \xi) - 2 \sigma (AX , \xi) = 0,
\end{equation*}
for any finite type $k \geq 1.$
Putting $X \in \mathcal D, \; AX = X$ and $X \in \mathcal D^\perp, \; AX = 2 X$ into this formula and separating parts tangent to $M$, we get $h = - (n +5)$ and $h = 0$, respectively, which is a contradiction, proving that $H_3$ cannot be of $k$-type for any finite $k$. 
\end{proof}

\begin{theorem}\label{thm1} 
There exists no real hypersurface of \,  $\mathbb HH^m(- 4), \, m \geq 2$, which is of Chen type 1 in $\bold H^1(m + 1)$ via $\Phi$. Here, we do not assume hypersurface to be curvature-adapted.
\end{theorem}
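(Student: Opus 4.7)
The plan is to assume that $M$ is of Chen-type 1 via $\widetilde x$, so $\lp \widetilde x = \lb(\widetilde x - \widetilde x_0)$ for some constant $\lb$ and constant $\widetilde x_0$, and to extract enough structural information on the shape operator $A$ to arrive at a numerical contradiction. I would first differentiate the 1-type equation along an arbitrary tangent $X$; since $\widetilde\nabla_X\widetilde x = X$, this gives $\widetilde\nabla_X(\lp\widetilde x) = \lb X$. Applying (\ref{eq30}) with $c = -1$ yields
\[
-2(n+4)\, X + f AX + 2 X_{\mathcal D^\perp} - f\s (X, \xi) - 2\s (AX, \xi) \;=\; \lb X,
\]
and I would separate this identity into its tangential and its $\mathbb HQ^m$-normal components.

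From the tangential part $-2(n+4)X + fAX + 2 X_{\mathcal D^\perp} = \lb X$, the case $f = 0$ is killed immediately: restricting to $\mathcal D$ forces $\lb = -2(n+4)$ while restricting to $\mathcal D^\perp$ forces $\lb = -2(n+3)$, impossible since both distributions are nontrivial for $m \geq 2$. Assuming $f \neq 0$, the same identity shows that $A$ acts as a scalar $\mu := (\lb + 2(n+4))/f$ on all of $\mathcal D$ and as a scalar $\alpha := (\lb + 2n+6)/f$ on all of $\mathcal D^\perp$. Thus a 1-type hypersurface is automatically curvature-adapted with at most two constant principal curvatures, and it obeys $f = (n-3)\mu + 3\alpha$ together with $f(\mu - \alpha) = 2$.

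The $\mathbb HQ^m$-normal component reduces to $\s ((fI + 2A)X, \xi) = 0$. On $\mathcal D^\perp$ this is vacuous, since $\s (\xi, U_q) = 0$ by (\ref{eq10}). On $\mathcal D$ it reads $(f + 2\mu)\, \s (X, \xi) = 0$, and I would verify from (\ref{eq8}) the key computation
\[
\la \s (X, \xi), \s (X, \xi)\ra \;=\; c\Big[\|X\|^2 - \sum_q \la X, U_q\ra^2\Big] \;=\; -\|X\|^2, \quad X \in \mathcal D,
\]
so $\s (X, \xi) \neq 0$ for $X \in \mathcal D \setminus \{0\}$ and therefore $f + 2\mu = 0$.

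Combining $\mu = -f/2$ with $f = (4m-4)\mu + 3\alpha$ gives $\alpha = (2m-1)f/3$, whence $f(\mu - \alpha) = -(4m+1)f^2/6 = 2$ forces $f^2 = -12/(4m+1) < 0$, the desired contradiction. I do not foresee any genuine obstacle in carrying this out: the entire argument is an exercise in sign bookkeeping, with the punchline riding on the hyperbolic sign of $c$. Running the same computation in $\mathbb HP^m$ (i.e.\ $c = +1$) produces $f^2 = 12/(4m+1) > 0$, which is consistent with the unique 1-type geodesic hypersphere of radius $\cot^{-1}\sqrt{3/(4m+1)}$ noted in the introduction, confirming that the nonexistence in $\mathbb HH^m$ is indeed a hyperbolic phenomenon.
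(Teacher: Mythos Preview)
Your argument is correct and its opening moves---differentiating the 1-type equation and separating components---coincide with the paper's. One minor point: formula (\ref{eq30}) as stated in the paper already assumes constant principal curvatures, so the $-(Xf)\xi$ term has been dropped. In full generality (combine (\ref{eq20}), (\ref{eq12}) and (\ref{eq23})) that term is present; it lives in the $\xi$-direction and simply forces $Xf=0$, so $f$ is constant. You never use this, and your tangential and $\sigma$-normal equations are unaffected, so the omission is harmless---but you should either cite (\ref{eq20}) and (\ref{eq23}) directly or remark that the missing $\xi$-component is orthogonal to everything you extract.

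Where you genuinely diverge from the paper is in the endgame. After showing that $M$ is curvature-adapted with two constant principal curvatures, the paper invokes Lemma~\ref{lem2} (the horosphere is not of finite type) together with Berndt's classification to conclude that $M$ is a geodesic sphere or a tube about $\mathbb HH^{m-1}$, then plugs in the explicit relation $\alpha=\mu+1/\mu$ to get $f+2\mu=(n+2)\mu+3/\mu>0$. You bypass both the horosphere lemma and the classification entirely: from $f(\mu-\alpha)=2$, $f=(n-3)\mu+3\alpha$, and $f+2\mu=0$ you derive the clean numerical contradiction $f^2=-12/(4m+1)<0$. This is more self-contained and more robust---it works at each point without ever identifying $M$ as a model hypersurface---and the closing observation that the sign flips in $\mathbb HP^m$ to recover the known 1-type sphere is a nice sanity check.
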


\begin{proof} \, 
Suppose that a hypersurface $M \subset \mathbb HH^m(-4)$ is of Chen-type 1 so that
$$ \widetilde x = \widetilde x_0 + \widetilde x_u, \quad \text{with}\quad \widetilde x_0 = \text{const} \quad \text{and}\quad\Delta \widetilde x_u = \lambda \widetilde x_u
$$
according to (\ref{eq1}). Then $\Delta \widetilde x - \lambda \widetilde x + \lambda \widetilde x_0 = 0$. Differentiating this relation with respect to $X \in \Gamma (TM)$ we get $\widetilde \nabla_X (\Delta \widetilde x) - \lambda X = 0$, which yields, by way of (\ref{eq20}) and (\ref{eq23}),
\begin{align*}
- (X f)\, \xi - f \sigma(X, \xi) - 2 \sigma (AX , \xi) + f AX + 2 c (n + 4) X - 2 c X_{\mathcal D^\perp} - \lambda X = 0 .
\end{align*}
Separating parts that are tangent to $M$, normal to $M$ but tangent to $\mathbb HH^m$ and normal to $\mathbb HH^m$ we get first from the $\xi$-component that $f = \text{tr} A$ is constant and 
\begin{equation}\label{eq42}
f AX + 2c (n + 4) X - 2 c X_{\mathcal D^\perp} - \lambda X = 0,
\end{equation}
\begin{equation}\label{eq43}
2 \sigma (AX , \xi) + f\, \sigma (X, \xi) = 0.
\end{equation}
Choosing $X$ in (\ref{eq42}) to belong to $\mathcal D$ and $\mathcal D^\perp$ we get, respectively,
$$ f AX + [2 c (n + 4) - \lambda] X = 0\quad \text{for}\;\; X \in \mathcal D \;\;\text{and} \quad  f AX + [2 c (n + 3) - \lambda] X = 0\quad \text{for}\;\; X \in \mathcal D^\perp. 
$$
This implies $f \neq 0$ and in our case $c = - 1$, so
$$ AX = \frac{[\lambda + 2 (n + 4)]}{f} X \quad \text{for}\;\; X \in \mathcal D \;\;\text{and} \quad   AX = \frac{[\lambda + 2 (n + 3)]}{f} X\quad\text{for}\;\; X \in \mathcal D^\perp. 
$$
Therefore, there are two constant principal curvatures
$$ \frac 1 f \, [\lambda + 2 (n + 4)]  \quad \text{and} \quad  \frac 1 f \, [\lambda + 2 (n + 3)]
$$
of respective multiplicities $n - 3$ and 3. Since $A\mathcal D \subset \mathcal D, $\; $A\mathcal D^\perp \subset \mathcal D^\perp $, $M$ is curvature adapted. Since the horosphere $H_3$ is not of 1-type by Lemma \ref{lem2}, $M$ could be geodesic hypersphere with principal curvatures $\mu = \coth r, \; \alpha = 2 \coth (2r)$ or a tube about $\mathbb HH^{m-1}$ with $\nu = \tanh r, \; \alpha = 2 \coth (2r)$. In either case $\alpha = \mu + \frac 1 \mu$, \; $f = n \mu + \frac 3 \mu$. Taking the metric product of (\ref{eq43})  with $\sigma (Y, \xi)$  with $ X, Y \in \mathcal D$ we get $AX = (- f/2) X$, i. e. $f + 2 \mu = 0$. However,
$$ f + 2 \mu = \Big( n \mu + \frac 3 \mu \Big) + 2 \mu = (n + 2) \mu + \frac 3 \mu \neq 0,
$$
since $\mu $ is positive, contradicting the above.
\par

\end{proof}

For hypersurfaces of class $A_1$ (geodesic hyperspheres and tubes about quaternionic hyperplanes) we have

\begin{lemma}\label{lem3} \begin{enumerate}
\item [\it{(i)}] A geodesic hypersphere in $\mathbb HP^m$ of any radius $r \in (0, \pi/2), \; r \neq \cot^{-1}\sqrt{3/ (4m + 1)}$ is of 2-type 
in $\bold H(m+1).$ A geodesic hypersphere in $\mathbb HH^m$ of arbitrary radius $r > 0$ is of 2-type in $\bold H^1(m + 1)$ via $\tx $ and the same 
holds for a tube of arbitrary radius $r > 0 $ about a totally geodesic quaternionic  hyperbolic hyperplane 
$\mathbb HH^{m-1} \subset \mathbb HH^m.$ These statements are also valid for any open portion of the respective submanifolds.
\item [\it{(ii)}] \, The only  mass-symmetric 2-type hypersurfaces of class $A_1$ in $\mathbb HQ^m(4c)$
are open portions of  geodesic hyperspheres of radius $r = \cot^{-1}\sqrt{1/m}$ in $\mathbb HP^m(4).$
\end{enumerate}
\end{lemma}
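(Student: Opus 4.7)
The strategy is to apply Lemma~\ref{Lem1} directly to each hypersurface of class $A_1$. Two simplifications are immediate. First, class-$A_1$ hypersurfaces satisfy the characterizing identity (\ref{eq40}), so condition $(C_4)$ is automatic. Second, on $\mathcal{D}$ there is a single principal curvature $\tau$ of multiplicity $n-3$ (namely $\mu$ for $P_1^0(r), H_1^0(r)$ and $\nu$ for $H_1^{m-1}(r)$) while $\alpha_1 = \alpha_2 = \alpha_3 = \alpha$. Since $S_q \mathcal D \subset \mathcal D$ and $V_\tau = \mathcal{D}$, formula (\ref{eq35}) forces $\tau_q = \tau$ for $q = 1,2,3$. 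A direct look at Table 1 gives in each case the simple identity $\alpha = \tau - c/\tau$ (that is, $\alpha = \tau - 1/\tau$ for $P_1^0(r)$ and $\alpha = \tau + 1/\tau$ in the two hyperbolic cases), together with $f = (n-3)\tau + 3\alpha$ and $f_2 = (n-3)\tau^2 + 3\alpha^2$.

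For part (i), I would substitute these data into (\ref{eq41}) to solve for $a$ as an explicit rational function of $\tau$ (equivalently, of $r$), then use $(C_1)$ to obtain $b$, and verify that $(C_3)$ is satisfied as an algebraic identity in $\tau$. The computation is linear in the unknowns $a,b$, so the real work is the check of consistency. Once $a, b$ are in hand, 2-type follows from Lemma~\ref{Lem1} as soon as $P(t) = t^2 - at + b$ has two distinct real roots. Since by Theorem~\ref{thm1} no real hypersurface of $\mathbb{HH}^m$ is of Chen-type $1$, neither root of $P$ can vanish in the hyperbolic cases, and I expect a short discriminant check to rule out a repeated root for every admissible $r$. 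In $\mathbb{HP}^m$, $b$ will vanish precisely at $r = \cot^{-1}\sqrt{3/(4m+1)}$ -- the 1-type radius from \cite{Dim2} -- which is why this single radius is excluded from the statement.

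For part (ii), I would take the metric product of the 2-type equation (\ref{eq26}) with $\widetilde x$. Using $\langle \widetilde x, \widetilde x\rangle = c/2$, $\langle \Delta \widetilde x, \widetilde x\rangle = n$, and the identity
\[
\langle \Delta^2\widetilde x - a\Delta\widetilde x + b\widetilde x,\, \widetilde x\rangle \;=\; f^2 - 6c + 2cn(n+4) - an + bc/2
\]
already extracted in the paragraph preceding (\ref{eq27}), the mass-symmetry condition $\widetilde x_0 = I/(m+1)$ -- for which $\langle \widetilde x_0, \widetilde x\rangle = c/(2(m+1))$ -- yields the single scalar constraint
\[
f^2 - 6c + 2cn(n+4) - an + \frac{bc}{2} \;=\; \frac{bc}{2(m+1)}.
\]
Substituting the expressions for $a, b, f$ from part (i), this reduces to a polynomial equation in the single variable $\tau$. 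I expect its admissible solutions to be exactly $\tau = \sqrt{m}$ (i.e.\ $r = \cot^{-1}\sqrt{1/m}$) in the $\mathbb{HP}^m(4)$ case, and to have no solutions with $\tau > 1$ (geodesic sphere in $\mathbb{HH}^m$) or $0 < \tau < 1$ (tube about $\mathbb{HH}^{m-1}$) in the hyperbolic cases.

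The main obstacle is purely computational: expanding $(C_1), (C_2), (C_3)$ using $f = (n-3)\tau + 3\alpha$ and $\alpha = \tau - c/\tau$ produces polynomial identities of moderately high degree in $\tau$, and certifying the consistency of three equations in the two unknowns $a,b$ must be done carefully. In part (ii) one must further check that the polynomial produced by the mass-symmetric constraint has no extraneous admissible roots; ruling these out using the geometric restrictions ($r \in (0, \pi/2)$ in the projective case, $r > 0$ and positivity of $\tau$ in the hyperbolic cases) is where most of the bookkeeping will live.
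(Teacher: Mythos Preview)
Your approach to part (i) is essentially the paper's: feed the class-$A_1$ data into $(C_1)$--$(C_3)$ via (\ref{eq41}), solve for $a$ and $b$, check consistency, and verify that $P(t)=t^2-at+b$ has two distinct real roots. One correction: at the excluded radius it is not $b$ that vanishes but the discriminant. The paper finds the two roots explicitly, $\lambda_u = 2(n+1)(\mu^2+c)$ and $\lambda_v = (n\mu^2+3c)(\mu^2+c)/\mu^2$, and these coincide precisely when $(n+2)\mu^2 = 3c$, i.e.\ at the 1-type radius; your claim that $b=0$ there is false (check $m=2$).

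For part (ii) your route diverges from the paper's and has a genuine gap. Pairing (\ref{eq26}) with $\widetilde x$ gives only the \emph{necessary} condition $\langle \widetilde x_0,\widetilde x\rangle = c/(2(m+1))$; since any constant vector orthogonal to the affine span of $\widetilde x(M)$ can be added to $\widetilde x_0$ without altering this inner product, the scalar constraint alone does not force $\widetilde x_0 = I/(m+1)$. The paper instead computes $\widetilde x_u,\widetilde x_v$ explicitly from (\ref{eq46}) via (\ref{eq47})--(\ref{eq48}), then uses (\ref{eq49}) to obtain $\widetilde x_0 = \widetilde x - \widetilde x_u - \widetilde x_v$ in closed form; the $\xi$-coefficient (the only component tangent to $\mathbb HQ^m$) is a nonzero multiple of $m\mu^2 - c$, yielding both necessity and sufficiency at once. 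Your constraint would isolate the same candidate radius (incidentally, $r=\cot^{-1}\sqrt{1/m}$ means $\mu^2 = 1/m$, not $\tau=\sqrt m$), but you would still owe a verification that this hypersphere is actually mass-symmetric --- and that verification is essentially the explicit computation you were trying to avoid.
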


\begin{proof} \, 
For a geodesic sphere $P_1^0(r) $ in $\mathbb HP^m$ or $H_1^0(r)$ in $\mathbb HH^m$ (class $A_1$ in $\mathbb HP^m$ and $A^{\prime}_1$ in $\mathbb HH^m$) 
define
$$ \cot_c(r) = \begin{cases} \cot r, \; \; \;\text{when} \; c = 1 \; \text{(projective case)}\\
          \coth r, \; \text{when} \; c = - 1 \; \text{(hyperbolic case)}\end{cases}
$$
to be the circular or hyperbolic version of cotangent and denote $\tan_c(r) = 1/\cot_c(r)$.
Let $\mu = \cot_c (r)$  be the principal curvature of multiplicity $4(m-1) = n-3$ and $\alpha = 2 \cot_c (2r)$ the principal curvature of $A |_{\mathcal D^\perp}$ of multiplicity 3, whereas $\mu = \tanh r, \; \alpha = 2 \coth (2r)$ for a tube about a complex hyperbolic hyperplane 
$\mathbb HH^{m-1}(-4)$ of class $A^{\prime\prime}_1.$ Then
$$  \alpha = \mu - \frac{c}{\mu}, \quad f = n \mu - \frac{3c}{\mu}, \quad f_2 = n \mu^2 + \frac{3}{\mu^2} - 6c, \quad \mu _q = \mu , \; \forall q = 1, 2, 3 .
$$

Setting $\tau = \mu$ and $\alpha_k = \alpha$ in  (\ref{eq41}) we get
$$ [(n + 2)c - 3\mu^{-2}]\, a = c (3n+2) (n + 2) \mu^2 + (3n^2 + 2n + 4) - \frac{3c (2n+3)}{\mu^2} - \frac{9}{\mu^4}.
$$
We may assume that $(n+2)c \neq 3/\mu^2 ,$ certainly true when $c = -1,$ and when $c = 1$ the equality would lead to
$\mu = \sqrt{3/(n+2)}$, i.e. to $r = \cot^{-1}\sqrt{3/(4m+1)}.$ However, the geodesic hypersphere of this radius in $\mathbb HP^m(4)$
is of 1-type (see \cite{Dim2}). Thus, dividing by $(n+2)c - 3\mu^{-2}$ we get
\begin{equation}\label{eq44}
a = (3n + 2) \mu^2 + \frac{3}{\mu^2} + c (3n + 5) = (\mu^2 + c) \Big(3n + 2 + \frac{3c}{\mu^2}\Big).
\end{equation}
Then from ($C_1$) or ($C_2$)we find
\begin{equation}\label{eq45} 
b = 2(n + 1)\left[ n \mu^4 + c (2n+3)\mu^2 + \frac{3c}{\mu^2} + (n + 6)\right] .
\end{equation}
Setting $\tau = \mu, \, \alpha_k = \alpha$ in ($C_3$) and solving ($C_3$) for $a$  gives the same value as in (\ref{eq44}), so the conditions ($C_1$) - ($C_3$) are consistent and satisfied by the above
values of $a$ and $b,$ the condition ($C_4$) being trivially satisfied for any class-$A$ hypersurface by (\ref{eq40}). According to Lemma \ref{Lem1}, the equation (\ref{eq26}) then. Moreover, the polynomial $P(\lambda ) = \lambda^2 - a \lambda + b $ has two distinct real roots 
$\lambda_u = 2 (n + 1)(\mu^2 + c)$ and $\lambda_v = n \mu^2 + \frac{3}{\mu^2} + c (n + 3) = \frac{1}{\mu^2}(n \mu^2 + 3c)(\mu^2 + c),$ 
which are the two eigenvalues of the Laplacian from the 2-type decomposition of $\tx .$ These values are different for if 
$\lambda_u = \lambda_v$ then either $\mu^2 = 1\, $ ($c = -1$ case) or $\mu^2 = \frac{3}{n+2}$ ($c = 1$ case). First condition would lead to 
$\coth r = \pm 1$ which is not possible and the second condition gives a 1-type geodesic hypersphere. By Lemma 1 and a result of \cite{Dim2} these are not of 1-type as long as $r \neq \sqrt{3/(n + 2)}$ in the projective case and therefore all other geodesic hyperspheres and tubes about $\mathbb HH^{m - 1}$ are indeed of 2-type. We can find the actual 2-type spectral decomposition
$\widetilde x = \widetilde x_0 + \widetilde x_u + \widetilde x_v$ from
\begin{equation}\label{eq46} 
\widetilde x_u = \frac{1}{\lambda_u (\lambda_u - \lambda_v)}\,(\Delta^2 \widetilde x - \lambda_v \Delta \widetilde x) , \qquad 
\widetilde x_v = \frac{1}{\lambda_v (\lambda_v - \lambda_u)}\, (\Delta^2 \widetilde x - \lambda_u \Delta \widetilde x).
\end{equation}
Moreover, in this case (\ref{eq20}) gives
\begin{equation}\label{eq47}
\Delta \widetilde x = - \Big(n \mu - \frac{3c}{\mu}\Big)\, \xi - 3 \s (\xi , \xi) - \sum_{e_i\in \mathcal D} \s (e_i, e_i)
\end{equation}
and (\ref{eq29}) reduces to
\begin{eqnarray}\label{eq48}
\Delta^2 \widetilde x &=& - \, \Big[n^2 \mu^3 + c (3n^2 - 2n - 12)\mu - \frac{3(2n-3)}{\mu} - \frac{9c}{\mu^3}\Big]\, \xi \\
&& \;\; + \,\Big[(n^2 - 4n - 6) \mu^2 - \frac{9}{\mu^2} - 6cn\Big]\, \s (\xi , \xi) 
- 2 (n+1)(\mu^2 + c ) \sum_{e_i \in \mathcal D} \s (e_i, e_i).\nonumber
\end{eqnarray}
Then from the above formulas we get
\begin{eqnarray*} \widetilde x_u &=& \frac{1}{8m(\mu^2 + c)^2}\, \Big[- 8c(m-1) \mu\, \xi + 4(m-1) \mu^2 \s (\xi , \xi) 
- (\mu^2 + c) \sum_{e_i\in \mathcal D} \s (e_i, e_i)\Big],\\ 
 \widetilde x_v &=& - \frac{\mu}{(\mu^2 + c)^2} \, [(\mu^2 - c)\, \xi + \mu \s (\xi, \xi )].
\end{eqnarray*}

Using the expression
\begin{equation}\label{eq49}  \widetilde x = \frac{1}{m+1} I - \frac{c}{8(m+1)} \Big[  4\, \s (\xi , \xi ) + \sum_{e_i\in \mathcal D} \sigma (e_i, e_i)\Big]
\end{equation}
obtained from Lemma 2 of \cite{Dim2},  
we find the center of mass $\widetilde x_0 = \tx - \widetilde x_u - \widetilde x_v$ to be
$$ \widetilde x_0 = \frac{I}{m+1} + \frac{m \mu^2 - c}{m (\mu^2 + c)^2}\, \Big[\mu\, \xi + \frac 1 2 \s (\xi, \xi) + (\mu^2 + c)\Big(\tx - \frac{I}{m+1}\Big) \Big].$$
Since in this equation $\xi$-component is the only part tangent to $\mathbb HQ^m,$ it follows that the submanifold is mass symmetric
($\widetilde x_0 = I/(m+1)$) if and only if $m \mu^2 = c.$ So, $c = 1$ and mass-symmetric 2-type geodesic hyperspheres exist in $\mathbb HP^m(4)$ only and are 
of radius $r = \cot^{-1}\Big( \frac{1}{\sqrt m}\Big).$ 
\vskip 0.3truecm
\end{proof}

\vskip 0.2truecm
Of the two eigenvalues $\lambda_u,  \, \lambda_v$ of the Laplacian for a geodesic sphere given above, in the quaternionic hyperbolic space $\lambda_v$ is the smaller one, whereas in the projective space which one of the two is smaller and which one is bigger depends on whether the radius of a geodesic sphere is smaller or greter than $\cot^{-1}\sqrt{3/(n + 1)}.$ In any case, one can obtained upper estimates for the first two eigenvalues $\lambda_1, \, \lambda_2$ of the Laplacian on a geodesic sphere in $\mathbb HQ^m(4 c )$. In the projective case we have $\lambda_1 \leq \min \{\lambda_u,\; \lambda_v \}$ and $\lambda_2 \leq \max \{\lambda_u,\; \lambda_v \}$. In the hyperbolic case, $\lambda_1 \leq \Big(n - \frac 3 {\mu^2}\Big)(\mu^2 - 1) $ and $\lambda_2 \leq 2 (n + 1)(\mu^2 - 1)$ where $\mu = \coth r$. I the case of a tube about a canonically embedde $\mathbb HH^{m-1} \subset \mathbb HH^m$, $\lambda_u, \lambda_v$ are negative and $\lambda_u < \lambda_v$

\vskip 0.3truecm\noindent
\begin{lemma}\label{Lemma 4} $(i)\; $ There are no 2-type hypersurfaces in $\mathbb HH^m(-4)$ of class $A_2,$ i.e. no 2-type tubes about canonically 
embedded $\mathbb HH^k \subset \mathbb HH^m, \; 1 \leq k \leq m-2.$ A hypersurface of class $A_2$ in $\mathbb HP^m(4)$ is of 2-type if and only 
if it is an open portion of either (a) the tube of radius $r = \cot^{-1}\sqrt{\frac {k+1}{m-k}}$ or (b) the tube of radius 
$r = \cot^{-1}\sqrt{\frac{4k+3}{4(m-k)+ 1}},$ about a canonically embedded, totally geodesic  $\, \mathbb HP^k(4) \subset \mathbb HP^m(4),$ 
for any $k = 1, 2, ..., m-2.$
\vskip 0.3truecm\noindent
$(ii)\; $ The only mass-symmetric 2-type hypersurfaces of class $A_2$ are those in the first series of tubes (a) above.
\end{lemma}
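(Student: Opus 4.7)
The plan is to specialize the conditions of Lemma~\ref{Lem1} to the data of class $A_2$ tubes from Table 1 and extract the admissible radii. For such a tube $P_1^k(r)$ or $H_1^k(r)$ with $1 \le k \le m-2$, the shape operator on $\mathcal D^\perp$ has a single eigenvalue $\alpha_1 = \alpha_2 = \alpha_3 = \alpha$, while $A|_{\mathcal D}$ has two distinct eigenvalues $\mu$ and $\nu$ satisfying $\mu\nu = -c$ and $\alpha = \mu - c/\mu = \nu - c/\nu$. Because these are class-$A$ hypersurfaces, formula (\ref{eq40}) ensures that condition $(C_4)$ of Lemma~\ref{Lem1} holds automatically. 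A short verification using (\ref{eq35}), or equivalently the well-known fact that each $S_q$ preserves the $A$-eigenspaces inside $\mathcal D$ for class-$A$ hypersurfaces, shows that $\mu_q = \mu$ and $\nu_q = \nu$ for every $q$. Hence $\sum_q \alpha_q = 3\alpha$, $\sum_q \alpha_q^2 = 3\alpha^2$, and $\sum_q \tau_q = 3\tau$, $\sum_q \tau_q^2 = 3\tau^2$ for $\tau \in \{\mu,\nu\}$; the quantities $f$ and $f_2$ read off Table 1 become polynomials in $\mu$ alone after substituting $\nu = -c/\mu$ and $\alpha = \mu - c/\mu$.

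With these substitutions, relation (\ref{eq41}) evaluated at $\tau = \mu$ and at $\tau = \nu$ yields two expressions for the single constant $a$; compatibility of the two produces a polynomial equation in $\mu^2$ (with parameters $k,m,c$). A third expression for $a$, coming from $(C_3)$ at $\tau = \mu$, must also agree with the first two, giving a second polynomial relation. The common zero locus of these relations determines the admissible values of $\mu$. In the projective case ($c = 1$), after discarding the degenerate factor $\mu^2 = 1$ (which corresponds to class-$A_1$ behavior), the remainder is expected to factor as
\begin{equation*}
\bigl[(m-k)\mu^2 - (k+1)\bigr]\bigl[(4(m-k)+1)\mu^2 - (4k+3)\bigr] = 0,
\end{equation*}
yielding exactly the two radii in (a) and (b). In the hyperbolic case ($c = -1$) one must show by direct sign analysis that the corresponding polynomial has no root with $\mu > 1$ for $1 \le k \le m-2$, thereby ruling out 2-type class-$A_2$ tubes in $\mathbb{HH}^m$. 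For each admissible $\mu$, condition $(C_1)$ then fixes $b$ uniquely, a discriminant check $a^2 - 4b \ne 0$ confirms that $P(t) = t^2 - at + b$ has two distinct real roots, and Lemma~\ref{Lem1} closes the argument.

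For the mass-symmetry part (ii), I would mirror the argument of Lemma~\ref{lem3}(ii). Using (\ref{eq46}) with the expressions for $\Delta\widetilde x$ and $\Delta^2 \widetilde x$ from (\ref{eq20}) and (\ref{eq29}) specialized to the tube, one obtains $\widetilde x_u$ and $\widetilde x_v$ as explicit combinations of $\xi$, $\s(\xi,\xi)$, and the partial sums $\sum_{e_i \in V_\mu \cap \mathcal D}\s(e_i,e_i)$ and $\sum_{e_i \in V_\nu \cap \mathcal D}\s(e_i,e_i)$. Subtracting from the analog of (\ref{eq49}) for $\widetilde x$ displays the center of mass $\widetilde x_0$, and imposing $\widetilde x_0 = I/(m+1)$ reduces to a single algebraic relation between $\mu$, $k$, and $m$; a direct verification should show that this relation holds for $\mu^2 = (k+1)/(m-k)$ but fails for $\mu^2 = (4k+3)/(4(m-k)+1)$. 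The main obstacle throughout is the algebraic bookkeeping: the defining equations have high formal degree in $\mu$, and isolating the claimed linear factors requires systematic use of $\mu\nu = -c$ and $\alpha = \mu - c/\mu$ together with careful factoring to keep the intermediate expressions tractable.
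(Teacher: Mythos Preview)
Your overall strategy coincides with the paper's: specialize $(C_1)$--$(C_3)$ to the class-$A_2$ data, note that $(C_4)$ is automatic via (\ref{eq40}), and extract the admissible $\mu^2$ from the resulting compatibility conditions. The mass-symmetry argument you sketch for part (ii) is also the one the paper uses. However, several of your anticipated intermediate results are wrong in ways that would mislead the actual computation.

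First, the two consistency checks you describe---matching the values of $a$ from (\ref{eq41}) at $\tau=\mu,\nu$, and then matching with $(C_3)$---do not yield two independent polynomial relations. In the paper both collapse to a single condition (equation (\ref{eq52})), so you will obtain one equation in $\mu^2$, not two. Second, there is no ``degenerate factor $\mu^2=1$'' to discard, and $\mu^2=1$ does not correspond to class-$A_1$ behaviour (class $A_1$ is the case $k=0$ or $k=m-1$, not a special value of $\mu$). Third, and most importantly, the resulting polynomial in $\mu^2$ is cubic, not quadratic. With $K=4k+3$ and $L=4(m-k-1)+3$ one finds
\[
\bigl[(L+1)\mu^2 - c(K+1)\bigr]\bigl[L(L+2)\mu^4 - 2c(LK+K+L+2)\mu^2 + K(K+2)\bigr]=0,
\]
giving three roots $\mu^2 = c\,\dfrac{K+1}{L+1},\ c\,\dfrac{K}{L+2},\ c\,\dfrac{K+2}{L}$. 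The second and third describe the \emph{same} family of tubes, one viewed as a tube over $\mathbb{HP}^k$ and the other over the complementary focal variety $\mathbb{HP}^{m-k-1}$; this identification is what reduces the list to the two series (a) and (b), and you would miss it with your predicted quadratic. Finally, in the hyperbolic case the nonexistence is immediate once the three roots are in hand: each is $c$ times a positive ratio, hence negative when $c=-1$, so no sign analysis of a complicated polynomial is needed.
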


\begin{proof} $(i)\; $Let $\mu = \cot r, \; \nu = - \tan r = - \frac{1}{\mu}$ for model hypersurface of class 
$A_2$ in $\mathbb HP^m$
and $\mu = \coth r, \; \nu = \tanh r =  \frac{1}{\mu}$ for model hypersurface of class $A_2$ in $\mathbb HH^m.$
Then $\mu, \; \nu$
have respective multiplicities $4l$ and $4k$ for some positive integers $k, \, l$ with $l = m - k - 1$ i.e. $n = 4l + 4k + 3.$ Moreover, from Table 1 and (\ref{eq35}) we get

\begin{equation}\label{eq50} 
\quad \mu\, \nu = - c, \quad \alpha = \mu - \frac{c}{\mu}= \mu + \nu , \quad \mu_q = \mu, \quad \nu_q = \nu, \;\ \forall q = 1, 2, 3.
\end{equation}
\begin{equation}\label{eq51}
f = L \mu + K \nu, \quad f^2 = L^2\mu^2 + K^2 \nu^2 - 2c KL, \quad f_2 = L \mu^2 + K \nu^2 - 6 c, 
\end{equation}
where $K:= 4k + 3$ and $L:= 4l + 3.$ Our goal is to examine when the three equations ($C_1$) - ($C_3$) are consistent and when constants $a$ and $b$
can be found to satisfy them (As we know, the condition ($C_4$) is satisfied by every class$-A_2$ hypersurface ). That comes down to the pair of
equations consisting of ($C_3$) and (\ref{eq41}), having the same solution for $a$ for either value of $\tau \in \{\mu, \nu \}.$
Consider the equation (\ref{eq41}) in which $\tau = \mu,$
multiplied by $[2c + f (\nu - \alpha )] = (2c - f \mu) $ and the same equation with $\tau = \nu$ multiplied by $(2c - f \nu).$
Subtract the two multiplied equations to eliminate $a.$ We get
\begin{equation}\label{eq52} 
f \, [f_2 + f^2 - c (n + 1) ] + 2 \alpha f \, (f + \alpha) - \, 4\, c\, \alpha = 0. 
\end{equation}
This is a necessary and sufficient condition for $a$ to have the same value from (\ref{eq41}), regardless of the choice of $\tau \in \mathfrak s(\mathcal D).$ 
On the other hand, subtracting the two equations obtained from (\ref{eq41}) for $\tau = \mu, \, \nu,$ gives
\begin{equation}\label{eq53}
a f = f \, [f_2 + c (3n + 23) ] + \, 4\, c\, \alpha. 
\end{equation}
Similarly, from the two equations contained in ($C_3$) for $\tau = \mu, \; \nu$ by subtracting we get
\begin{equation}\label{eq54}
a = 2 f_2 + f^2 + 2 \alpha (f + \alpha) + 2c (n + 11), 
\end{equation}
and by eliminating $a$ from these two equations we get exactly the same condition  (\ref{eq52}) as before. Moreover, assuming (\ref{eq52}), 
we check that (\ref{eq53}) and (\ref{eq54}) are consistent, so there is only one condition, namely (\ref{eq52}), to be satisfied in order 
to make ($C_1$) - ($C_3$) consistent,
regardless of the choice of $\tau ,$ and enable us to solve for $a$ and $b$. Substituting  the values from (\ref{eq50}) and (\ref{eq51}) 
into (\ref{eq52}), using
$\alpha = \mu + \nu$ we get
\begin{eqnarray*}
 0\;  = &\, L (L + 1)(L+2)\mu^3 + K (K+1)(K+2)\nu^3\\
& \, - c\, \mu\, (3 L^2K + 3 L^2 + 6LK + 8L + 2K + 4 )\\
& \, - c\, \nu\, (3 LK^2 + 3 K^2 + 6LK + 8K + 2L + 4).
\end{eqnarray*}
With $\nu = - c/\mu$ this yields
\begin{equation}\label{eq55}
 [(L+1)\mu^2 - c (K+1)]\,[ L(L+2)\,\mu^4 - 2c (LK + K + L + 2)\,\mu^2 + K(K+2)] = 0,
\end{equation}
which has the following three solutions
$$ (a) \; \mu^2 = \frac{(K+1)c}{L + 1} \qquad\quad (b) \; \mu^2 = \frac{Kc}{L + 2} \qquad\quad
(c) \; \mu^2 = \frac{(K+2)c}{L}. \qquad\quad
$$
Clearly, when $c = - 1$ none of them is possible, so there are no 2-type hypersurfaces of $\mathbb HH^m(-4)$ among class$-A_2$ hypersurfaces.
When $c = 1$ from (\ref{eq54}) we find
\begin{equation}\label{eq56}
a = (L^2 + 4L + 2)\,\mu^2 + (K^2 + 4K + 2)\, \nu^2 - 2 LK.
\end{equation}
Since $L + K = n + 3 = 4m + 2$ and with $c = 1$ , $2 c (n + 3) + \alpha f = (L + K) + L \mu^2 + K \nu^2$, substituting these and $a$ from (\ref{eq56}) into ($C1$) we obtain
\begin{align*}
b = & \;\;   L(L + 1)(L + 2) \mu^4 + (L^3 - L^2 K + 2 L^2 + 2 L K + 2 L + 2 K)\, \mu^2\\
 &+  K (K + 1)(K + 2) \nu^4 + (K^3 - L K^2 + 2 K^2 + 2 L K + 2 L + 2 K) \, \nu^2 \\
& - L^2 K - L K^2 - L^2 - K^2 + 4 L K + 2 L + 2 K.
\end{align*}
\par
From these and $a = \lambda_u + \lambda_v , \quad b = \lambda_u \lambda_v$, we find the two eigenvalues of the Laplacian 
from the 2-type decomposition to be
\begin{eqnarray}\label{eq57} 
\lambda_u &=& (L+1)(L+2)\mu^2 + (K+1)(K+2)\nu^2 - (L + K + 2LK), \\
\lambda_v &=& L\mu^2 + K\nu^2 + L + K, \qquad \; \mu = \cot r, \;\; \nu = - \tan r . \nonumber
\end{eqnarray}
In the case $(a)$, we get $\lambda_u = 2(n + 5) = 8 (m + 1), \; \lambda_v = 2 (n+3) - \frac{(l-k)^2}{(l+1)(k+1)} = \frac{8(m+1)(LK+ 2m + 1)}{(L+1)(K+1)},$ 
$\; \lambda_u > \lambda_v, $ so the
hypersurface is of 2-type. Since $\mu^2 = \cot^2r = \frac{K+1}{L+1},$ it follows from the Bernd's list that the hypersurface is congruent to an 
open portion of the tube of radius
$r = \cot^{-1}\sqrt{\frac{K+1}{L+1}} = \cot^{-1}\sqrt{\frac{k+1}{m-k}}$ about a canonically embedded
$\mathbb HP^k(4) \subset \mathbb HP^m(4),$ for any  $ k = 1,..., m-2\,$. In case $(b)$, (\ref{eq57}) yields
$$ \lambda_u = 8 (m+1)\frac{K+1}{K} = \frac{8 (n + 5) (k + 1)}{4 k+3} , \quad  \lambda_v = 8 (m+1) \frac{L+1}{L+2} = \frac{8 (n + 5) (l + 1)}{4 l + 5} , 
$$
\noindent
$\lambda_u > \lambda_v.$
Since $\mu^2 = \cot^2r = \frac{K}{L+2},$ we identify such hypersurface as an open portion of the tube of radius
$ r = \cot^{-1}\sqrt{\frac{K}{L+2}} = \cot^{-1}\sqrt{\frac{4k+3}{4(m-k)+1}}$ about a canonically embedded
$\mathbb HP^k(4) \subset \mathbb HP^m(4),$ for any  $ k = 1,..., m-2.$ In $\mathbb HP^m(4)$ cases (b) and (c) generate the same set of examples each being a tube of appropriate radius over one of a pair of alternate focal submanifolds $\mathbb HP^k$ and $\mathbb HP^l$. Thus, there is no need to consider the last case. 
\vskip 0.5truecm\noindent
$(ii)\;$ For a class$-A_2$ hypersurface we have from (\ref{eq57}) 
\begin{equation}\label{eq58} 
\lambda_u - \lambda_v = (L^2 + 2L + 2)\, \mu^2 + (K^2 + 2K + 2)\, \nu^2 -  2 (L+K+  LK).
\end{equation}
Note that from Table 1  and the accompanying discussion, in addition to principal curvature $\alpha = 2 \cot_c(2r)$ 
an $A_2$-hypersurface has also two more principal curvatures $\mu = \cot_c r$ and $\nu = - c\tan_c r,$ 
with corresponding principal subspaces $V_\mu$ and $V_\nu, $ being $J_q$-invariant.
Then from (\ref{eq20}) and (\ref{eq29}) for a split basis of principal directions $\{ e_i\}$ in $\mathcal D = = V_\mu \oplus V_\nu$ we get
$$ \Delta \widetilde x = - (L \mu + K \nu) \xi -3  \sigma (\xi , \xi) - \sum_{e_i \in V_\mu} \s (e_i, e_i) - \sum_{e_j \in V_\nu} \s (e_j, e_j), 
$$ 
\begin{align*}
\Delta^2 \widetilde x = & -\, \Big\{L^2 \mu^3 + K^2 \nu^3 + [ L^2 + 4(2m - 1) L - 12]\, \mu + [ K^2 + 4(2 m - 1) K - 12]\, \nu\Big\}\; \xi\\
&+ \Big[(L^2- 4 L -  6) \mu^2 + (K^2 -  4 K -  6)\nu^2 - 2LK \Big]\, \s (\xi , \xi)\\
&- 2 (L+1)(\mu^2 + 1) \sum_{e_i \in V_\mu} \sigma (e_i, e_i) - 2 (K+1)(\nu^2 + 1)\sum_{e_j \in V_\nu} \s (e_j, e_j).
\end{align*}
Then $\widetilde x_u $ and $\widetilde x_v $ can be computed as in (\ref{eq46}). Since we now assume the hypersurface of $\mathbb HP^m$ to be of type 2 and mass-symmetric 
via $\widetilde x$ we must have $\widetilde x_0 = \widetilde x - (\widetilde x_u + \widetilde x_v) = I/(m+1).$ Because $I$ and $\widetilde x$ are normal to $\widetilde x (\mathbb HP^m),$ a necessary condition 
for mass-symmetry in $\bold H^{(1)}(m+1)$ is that the $\xi$-component of $\widetilde x_u + \widetilde x_v$ be zero. 
\vskip 0.2truecm
Observing the corresponding values of $\lambda_u , \, \lambda_v$ in each of the cases we see that the $\xi$-component of $\widetilde x_u + \widetilde x_v$
for hypersurfaces in $(b)$ is never zero, whereas for hypersurfaces of case $(a)$ this component is identically equal to zero. 
Namely, since $a = \lambda_u + \lambda_v$ and $b = \lambda_u \lambda_v$ from (\ref{eq46}) we have
\begin{equation}\label{eq59}
\widetilde x_u + \widetilde x_v = \frac 1{\lambda_u \lambda_v} \Big[ (\lambda_u + \lambda_v) \Delta \widetilde x - \Delta^2 \widetilde x \Big] = \frac 1 b\, (a\, \Delta\widetilde x - \Delta^2 \widetilde x).
\end{equation}
From this and the preceding formulas, the $\xi$-component of $- (\widetilde x_u + \widetilde x_v)$ of any hypersurface belonging to case (b) when multiplied by $\nu$ is found to be equal to $64 (m + 1)/b K \neq 0$, whereas using the values $\mu^2 = \frac{K + 1}{L + 1}, \; \nu^2 = \frac{L + 1}{K + 1}$  to reduce powers $\mu^3$ and $\nu^3$ to multiples of $\mu$ and $\nu$, $K + L = 4m + 2$, and (\ref{eq20}), (\ref{eq29}), (\ref{eq51}), refering to (\ref{eq56}) we compute the $\xi$-component of (\ref{eq59}) for any hypersurface in (a)  to be 
$$ \frac 2 b \Big\{(L + 1)[LK + 4 (m - 1)] \mu  + (K + 1) [LK + 4 (m - 1)] \nu\Big\} ,
$$which yields 0 when multiplied through by $\mu$.
\par
By (\ref{eq49}) we know that
\begin{equation}\label{eq60}
\frac I{m + 1} - \widetilde x = \frac 1 {8(m + 1)} \Big[4 \sigma (\xi , \xi) + \sum_{e_i \in V_\mu}\sigma (e_i, e_i) + \sum_{e_j \in V_\nu}\sigma (e_j, e_j)  \Big].
\end{equation}
Then, further straightforward computations show that $\sigma (\xi , \xi)$-component and $\sum \sigma(e_i, e_i)$ components of $- (\widetilde x_u + \widetilde x_v)$ where  $e_i$ belongs to $V_{\mu}$ or $V_{\nu}$  are equal to corresponding components of the right-hand side of (\ref{eq60}), so all the components
on both sides of a mass-symmetric 2-type decomposition of $\widetilde x$ are matched and with this, part $(ii)$ is also proved. 
\end{proof}
\vskip 0.3truecm
The two families of tubes referred to in Lemma 4 have also another representation.
Let
$$M_{4k+3, 4l+ 3}(r):= S^{4k+3}(\cos r) \times S^{4 l+3}(\sin r), \; 0 < r < \pi / 2, $$
be the family of generalized Clifford tori in an
odd-dimensional sphere $S^{n+4} \subset \mathbb H^{m+1}, \, n = 4m-1.$ By choosing the two spheres (with the indicated radii) in the above
product to lie in quaternionic subspaces we get the fibration $S^3 \to M_{4k+3, 4 l+3}(r) \to M^{\mathbb H}_{k, l}(r):= \pi (M_{4k+3, 4 l+3}(r))$ 
compatible with the Hopf fibration $\pi : S^{n+4} \to \mathbb HP^m(4),$ which submerses $M_{4k+3, 4 l+3}(r)$ onto $M^{\mathbb H}_{k, l}(r)$.
P\'erez and Santos show \cite{PS} that $M^{\mathbb H}_{k, l}(r)$ is a tube of radius $r$ about totally geodesic $\mathbb HP^k(4)$
with principal curvatures $\cot r, \; - \tan r,\; 2 \cot (2r)$ of respective multiplicities $4 l, 4 k,$ and $3.$ Accordingly, the family
of hypersurfaces corresponding to the case $(a)$ is given as open portions of
\vskip 0.2truecm
$   \qquad\qquad   M^{\Bbb H}_{k, l}(r) = \pi \left(S^K\left( \sqrt{\frac{K+1}{n+3}}\right)\times
S^L\left( \sqrt{\frac{L+1}{n+3}}\right)  \right),\quad \cot^2r = \frac{K+1}{L+1},$
\newline\noindent
and the family of hypersurfaces corresponding to the case $(b)$ is
\vskip 0.2truecm
$ \qquad\qquad  M^{\Bbb H}_{k, l}(r) = \pi \left(S^K\left( \sqrt{\frac{K}{n+3}}\right)\times
S^L\left( \sqrt{\frac{L+2}{n+3}}\right)  \right), \quad \cot^2r = \frac{K}{L+2},$
\newline\noindent
where for both families $K = 4 k+ 3$ and $L = 4 l+ 3$ are odd positive integers with $K + L = n + 3 = 4 m + 2.$ The family of hypersurfaces corresponding to the case $(c)$ is the same family as 
in $(b),$ with the roles of $K$ and $L$ interchanged and the spherical factors in the above representation reversed. Hypersurfaces of case $(c)$ can be also described as tubes over 
$\mathbb HP^k(4)$ of radius $ r = \cot^{-1}\sqrt{\frac{4k+5}{4(m-k)-1}},$ for $k = 1, 2, \dots , m-2, $ but are not examined as a separate
case since they constitute the same family as the one under case $(b)$. Namely, the tube about $\mathbb HP^k(4)$ of this radius $r$ is the 
same as the tube over the other focal variety $\mathbb HP^l(4)$ of radius $\frac{\pi}{2} - r = \cot^{-1}\sqrt{\frac{4 l+3}{4(m-l)+1}},$ 
which appears within family $(b).$
\vskip 0.4truecm\noindent
{\bf Remark.} Note that according to a result of Barbosa et al. \cite{BdCE}, the tube of radius $r$ over $\mathbb HP^k(4)$  in $\mathbb HP^m(4)$ \, is stable with respect to normal
variations preserving the enclosed volume if and only if 
$ \cot^{-1}\sqrt{\frac{4 k+5}{4(m-k)-1}} \leq \, r\, \leq \cot^{-1}\sqrt{\frac{4 k+ 3}{4(m-k)+1}} \;$  . Hence, the 2-type tubes over $\mathbb HP^k(4)$ of radii $ \cot^{-1}\sqrt{\frac{4 k+ 3}{4(m-k)+1}}$
and $ \cot^{-1}\sqrt{\frac{4k+5}{4(m-k)-1}}$
are distinguished by being the largest, respectively the smallest, stable tubes about $\mathbb HP^k, $ for each  $ k = 1, 2, ..., m-2,$
i.e. the values of radii in cases $(b)$ and $(c)$ are precisely the endpoints of the stability interval for $r.$
\vskip 0.4truecm
\begin{lemma}\label{Lemma 5}
There exist no class-B hypersurfaces of $\mathbb H H^m(- 4)$ which are of 2-type via $\widetilde x$, i.e. no 2-type hypersurface $H_2(r)$ in $\mathbb HH^m(- 4)$ as given in Table 1. A class-B hypersurface of $\mathbb H P^m(4)$ is of 2-type via $\widetilde x$ if and only if it is an open portion of one of the following
\begin{enumerate}
\item [\it{(i)}] \, The tube of radius $r = \frac 1 2 \cot^{-1}(1/\sqrt m) \, $  about a canonically embedded, totally geodesic  $\, \mathbb CP^m(4) \subset \mathbb HP^m(4);$ 

\item [\it{(ii)}] \, The tube of radius $r = \frac 1 2\cot^{-1}\sqrt{(3 + \sqrt{96 m^2 - 15})\Big/2 (4m^2 - 1)}\, $ about a canonically embedded, totally geodesic  $\, \mathbb CP^m(4) \subset \mathbb HP^m(4).$ 
\end{enumerate}
\vskip 0.2truecm
In both of these cases, the hypersurface is also mass-symmetric.
\end{lemma}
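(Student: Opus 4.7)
The plan is to apply conditions $(C_1)$–$(C_4)$ of Lemma 1 systematically to the class-$B$ hypersurfaces in Table 1, mimicking the strategy of Lemma 4 but now with two distinct eigenvalues of $A|_{\mathcal D^\perp}$. Write $\mu,\nu$ for the eigenvalues of $A|_{\mathcal D}$ (each of multiplicity $2(m-1)$) and $\alpha_1,\alpha_2$ for those of $A|_{\mathcal D^\perp}$ (multiplicities $1$ and $2$). On an open dense subset one can, by Lemma 3.6 of \cite{Ber}, choose the canonical basis $\{J_1,J_2,J_3\}$ so that $AU_1=\alpha_1 U_1$ and $AU_k=\alpha_2 U_k$ for $k=2,3$. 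First I will record $f$, $f_2$, $\sum_q\alpha_q$, $\sum_q\alpha_q^2$ as trigonometric expressions in $r$, then use (\ref{eq35}) to compute the partners $\tau_q$ for each $\tau\in\{\mu,\nu\}$; the identity $\mu\nu=-c$ together with standard double- and half-angle manipulations reduces each $\tau_q$ to an explicit rational expression in $\mu,\nu,\alpha_1,\alpha_2$.

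Next I will eliminate $a$ and $b$ from the system $(C_1)$–$(C_3)$. Subtracting the instance of $(C_1)$ at $\alpha_k=\alpha_1$ from that at $\alpha_k=\alpha_2$ kills $b$ and yields one expression for $a$; subtracting $(C_2)$ at $\tau=\nu$ from $(C_2)$ at $\tau=\mu$ gives a second; subtracting the analogous pair of $(C_3)$ relations gives a third. Demanding that these three expressions for $a$ agree produces a single polynomial equation in $t:=\cot_c(2r)$, after which $b$ is recovered from $(C_1)$ itself. In the projective case this equation should factor, with the admissible positive roots being $t^2=1/m$ and $t^2=\bigl(3+\sqrt{96m^2-15}\bigr)\big/\bigl(2(4m^2-1)\bigr)$, corresponding exactly to the radii (i) and (ii); in the hyperbolic case the same procedure should yield no root compatible with $\mu=\coth r>1$ and $\nu=\tanh r\in(0,1)$, thereby establishing the nonexistence assertion for $H_2(r)$.

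The principal obstacle will be verifying $(C_4)$, since class-$B$ hypersurfaces do not satisfy the parallel-type formula (\ref{eq40}). Here I plan to exploit Berndt's explicit description of $\nabla A$ for class-$B$ hypersurfaces, derived from the Codazzi equation (\ref{eq16}) together with the block structure of $A$ on $\mathcal D\oplus\mathcal D^\perp$ and the fact that the $\mathcal J$-action swaps $V_\mu$ with $V_\nu$ for the distinguished index $q$ (namely the one with $\mu_q=\nu$), while for the other two indices $S_q$ maps $V_\mu,V_\nu$ into more intricate combinations. After inserting these expressions, the alternating sum $\langle (\nabla_X A)Y,Z\rangle+\sum_q\langle(\nabla_X A)(S_q Y),S_q Z\rangle$ should reorganize into pairs that cancel thanks to the swap, leaving a residue which either vanishes identically or coincides with the polynomial condition already obtained from $(C_1)$–$(C_3)$. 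Thus $(C_4)$ will impose no further constraint beyond the two admissible radii.

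Finally I will verify mass-symmetry at both admissible radii. Using (\ref{eq46}) with $\Delta\widetilde x$ from (\ref{eq20}) and $\Delta^2\widetilde x$ from (\ref{eq29}), and comparing with the expansion (\ref{eq49}) of $I/(m+1)-\widetilde x$, the center of mass $\widetilde x_0=\widetilde x-\widetilde x_u-\widetilde x_v$ admits an explicit expansion in $\xi$, $\sigma(\xi,\xi)$, $\sum_{e\in V_\mu}\sigma(e,e)$, $\sum_{e\in V_\nu}\sigma(e,e)$, and $\sum_q\sigma(U_q,U_q)$. The vanishing of the $\xi$-component of $\widetilde x_u+\widetilde x_v$ (necessary for mass-symmetry) should follow directly from the polynomial identities derived in step 2, and a coefficient-by-coefficient comparison with (\ref{eq49}) will then yield $\widetilde x_0=I/(m+1)$, completing the lemma.
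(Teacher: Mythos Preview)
Your elimination strategy for $(C_1)$--$(C_3)$ is sound and essentially parallel to the paper's, though organized a bit differently. In particular, note that for class-$B$ hypersurfaces one finds (via (\ref{eq35})) that $\mu_1=\mu$, $\mu_2=\mu_3=\nu$ and $\nu_1=\nu$, $\nu_2=\nu_3=\mu$; so $S_1$ \emph{preserves} each of $V_\mu,V_\nu$ while $S_2,S_3$ swap them. Your description (``the distinguished index\dots swaps'' and ``for the other two indices $S_q$ maps $V_\mu,V_\nu$ into more intricate combinations'') has this structure backwards, and there are two swapping indices, not one. This matters because it is exactly this structure that makes $\tau+\sum_q\tau_q$ and $\tau^2+\sum_q\tau_q^2$ independent of $\tau\in\{\mu,\nu\}$, which is what drives the clean elimination.

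The real gap is your treatment of $(C_4)$. You invoke ``Berndt's explicit description of $\nabla A$ for class-$B$ hypersurfaces,'' but no such closed formula is available; the characterization (\ref{eq40}) is precisely what distinguishes class $A$ from class $B$. Deriving $\nabla A$ for $P_2(r)$ or $H_2(r)$ from Codazzi and the block structure alone is substantial, and your hoped-for cancellation (``should reorganize into pairs that cancel thanks to the swap'') is speculation, not an argument. There is no reason to expect $(C_4)$ to collapse to the same polynomial already obtained from $(C_1)$--$(C_3)$.

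The paper sidesteps this entirely. Once $a$ and $b$ are determined from $(C_1)$--$(C_3)$ and the compatibility equation is solved for $\alpha^2$, the paper does \emph{not} verify $(C_4)$; instead it verifies the mass-symmetric 2-type equation
\[
\Delta^2\widetilde x - a\,\Delta\widetilde x + b\Bigl(\widetilde x - \tfrac{I}{m+1}\Bigr)=0
\]
directly, by computing $\Delta^2\widetilde x$ from (\ref{eq29}), $\Delta\widetilde x$ from (\ref{eq20}), and $\widetilde x - I/(m+1)$ from (\ref{eq49}), then matching the $\xi$-, $\sigma(\xi,\xi)$-, and $\sum_{e_i\in\mathcal D}\sigma(e_i,e_i)$-components. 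This simultaneously establishes that the hypersurface is of 2-type (so $(C_4)$ holds a posteriori) and that it is mass-symmetric, making your separate step~4 unnecessary. You should replace your $(C_4)$ plan with this direct verification.
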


\begin{proof}
For class-$B$ hypersurfaces and the values of their principal curvatures given in Table 1, observing the notation introduced in Lemma 3 we have the principal curvatures of $A|_{\mathcal D}$
$$ \mu = \cot_c (r), \; \nu = - c \tan_c (r), \quad \text{both of multiplicity} \quad 2 (m - 1)
$$
and those of $A|_{\mathcal D^\perp}$,
$$ \alpha := \alpha_1 = 2 \cot_c( 2 r), \quad \alpha_2 = \alpha_3 = - 2 c \tan_c (2 r),
$$
where $\alpha$ has multiplicity 1 and the other one multiplicity 2. Then
\begin{equation}\label{eq61}
\mu \nu = - c, \quad \mu + \nu = \alpha, \quad \alpha_2 = \alpha_3 = - 4 c/\alpha
\end{equation}
\begin{equation}\label{eq62}
\sum_{q = 1}^3 \alpha_q = \alpha - \frac{8 c}\alpha , \qquad \sum_{q = 1}^3 \alpha_q^2 = \alpha^2 + \frac{32}{\alpha^2}
\end{equation}
\begin{equation}\label{eq63}
f = (2 m - 1) \alpha - \frac{8c} \alpha , \qquad f^2 = (2m - 1)^2 \alpha^2 + \frac{64}{\alpha^2} -  16 c (2 m - 1)\end{equation}
\begin{equation}\label{eq64}
f_2 = (2m - 1) \alpha^2 + \frac{32}{\alpha^2} + 4 c (m - 1) . \hskip 4truecm
\end{equation}
For every $\tau \in \mathfrak s (\mathcal D)$ we find from (\ref{eq35}) that $\tau_q = (2 c + \alpha_q \tau)/(2 \tau - \alpha_q)$, from which we get
\begin{equation}\label{eq65} \mu_1 = \mu ,\;\; \mu_2 = \mu_3 = \nu \quad \text{and} \quad \nu_1 = \nu, \;\; \nu_2 = \nu_3 = \mu
\end{equation}
and hence for both $\tau = \mu$ and $\tau = \nu$ we have
\begin{equation}\label{eq666}
\tau + \sum_q \tau_q = 2 \alpha, \qquad \tau^2 + \sum_q \tau_q ^2 = 2 \alpha^2 + 4 c \, .
\end{equation}

In $(C_1)$ substitute $\alpha_1$ and  $\alpha_2$ for $\alpha_k$ and subtract the two formulas to get
\begin{align}\label{eq67}
f a &= f\, [f_2 + c (3n + 7)] - 4 c\sum_q \alpha_q \\
&= (2m - 1)^2 \alpha^3 + 4 c (8 m^2 - 8 m + 1) \alpha - \frac{64 m}{\alpha} - \frac{256 c}{\alpha^3}. \nonumber
\end{align}
and then substututing this back into $(C_1)$ we get
\begin{equation}\label{eq68}
(n + 3) a = \frac b 2 \, c + 2 \alpha^2 + \frac{128}{\alpha^2} + 8 c (4 m^2 + 1).
\end{equation}
In $(C_2)$ cancell out $\tau fa = \tau \Big\{f\, [f_2 + c (3 n + 7)] - 4 c \sum_q \alpha_q \Big\}$ on both sides, use (\ref{eq62}) - (\ref{eq66}) and $n = 4m - 1$ to get
\begin{equation}\label{eq69}
(n + 4) a = \frac b 2\, c + (4 m^2 + 4 m + 1) \alpha^2 + \frac{64}{\alpha^2} + 4 c (8 m^2 + 4 m + 1)
\end{equation}
Subtracting (\ref{eq68}) from (\ref{eq69}) gives
\begin{equation}\label{eq70}
a = (4 m^2 + 4 m - 1) \alpha^2 - \frac{64}{\alpha^2} + 4 c (4 m - 1)
\end{equation}
and substituting this in either (\ref{eq68}) or (\ref{eq69}) yields
\begin{equation}\label{eq71}
b = 8 c (m + 1) \Big[(4 m^2 + 2 m - 1) \alpha^2 - \frac{64}{\alpha^2} + 4 c (2m - 1)  \Big]
\end{equation}
Joint validity of the formulas (\ref{eq67}) - (\ref{eq71}) makes $(C_1)$ and $(C_2)$ consistent. Next, cancel out $f a$ from both sides of $(C_3)$ using (\ref{eq67}) and use (\ref{eq62}) - (\ref{eq66}) on the right hand side to simplify and solve for $a$ to get exactly the same value as in (\ref{eq70}), which means that $(C_3)$ is consistent with the previous formulas. Finally, to make (\ref{eq67}) consistent with (\ref{eq70}), multiply the latter by $f$ from (\ref{eq63}) to obtain 
\begin{equation}\label{eq72}
f a = (2m - 1)(4 m^2 + 4m - 1)\, \alpha^3 - 4 c (14m - 3)\, \alpha - \frac{32(8m- 3)}{\alpha} + \frac{512 c}{\alpha^3} ,
\end{equation}
which must be the same as $(\ref{eq67})$ for suitable values of $\alpha^2$. Equating $fa$ from these two equations results in the equation
\begin{equation}\label{eq73} m (4 m^2 - 1)\, \alpha^6 - 4 c (4m^2 + 3m - 1)\, \alpha^4 - 48 (2m - 1)\, \alpha^2 + 384 c = 0 .
\end{equation}
The left-hand side factors as
$$ (m \alpha^2 - 4c)[(4m^2 - 1) \alpha^4 - 12 c \alpha^2 - 96 ],
$$
from where we get the solutions of (\ref{eq73}) to be
$$ \alpha^2 = \frac{4 c}m, \quad \alpha^2 = \frac{6 c \pm 2 \sqrt{96 m^2 - 15}}{4m^2 - 1}.
$$
Since in the hyperbolic case $c = - 1$ and $\alpha^2 = 4 \coth^2(2 r) > 4$, we see that none of these works in that case, so there are no hypersurfaces of class $B$ in $\mathbb HH^m(- 4)$ which are of 2-type. In the projective case  ($c = 1$) we get two solutions for $\alpha = 2\cot{2r}$ and the corresponding values for the radii
$$ r = \frac 1 2\,  \text{arccot}\left(\frac 1{\sqrt m}\right)\, , \;\; r = \frac 1 2\, \text{arccot}\left(\sqrt{\frac{3 + \sqrt{96 m^2 - 15}}{2(4 m^2 - 1)}}\;\right) ,
$$ 
which both satisfy the conditions $(C_1)$ - $(C_3)$ of Lemma 1. There is also the condition $(C_4)$ which is not automatically satisfied since the relation (\ref{eq40}) does not hold for class-$B$ hypersurfaces. However, we will show  by direct calculation that the tubes of these two radii about canonically embeded $\mathbb CP^m(4) \subset \mathbb HH^m(4) $ are mass-symmetric and of 2-type by verifying that the mass-symmetric 2-type equation 
\begin{equation}\label{eq74} \Delta^2 \widetilde x - a \Delta \widetilde x + b \left(\widetilde x - \frac{I}{m+1}  \right) = 0
\end{equation}
holds for these hypersurfaces. Namely, assume that the basis of quaternionic structure $\{ J_q \}, q = 1, 2, 3$ has been chosen locally so that $U_q = - J_q \xi$ is a principal direction for $\alpha_q$ and $\{e_i  \}$ is a basis of principal directions. Using (\ref{eq62}) - (\ref{eq64}), formula (\ref{eq20}) gives
\begin{equation}\label{eq75}
\Delta \widetilde x = - \Big[(2m - 1)\alpha - \frac {8c}{\alpha}  \Big]\, \xi - 3 \sigma (\xi, \xi) - \sum_{e_i \in \mathcal D} \sigma (e_i, e_i)
\end{equation}
and (\ref{eq29}) reduces to
\begin{align}\label{eq76}
\Delta^2 \widetilde x = &- f a \, \xi + \Big\{(4 m^2 - 4 m - 1) \alpha^2 - \frac{64}{\alpha^2} - 4 c (4m + 1)  \Big\}\, \sigma (\xi, \xi) \nonumber\\
& - 2 c (4m + 3) \sum_{e_i \in \mathcal D} \sigma (e_i, e_i ) - 2 f\,\sum_{e_i \in \mathcal D} \sigma (e_i, Ae_i ) -  2 \sum_{e_i \in \mathcal D}\sigma (Ae_i, Ae_i ) ,
\end{align}
where $fa$ is given by (\ref{eq67}). Let the notation $\{ \cdot \}_{mod\,  \Xi}$ mean taking only terms of the expression $\{ \cdot \}$ that involve $\xi$ and $\sigma (\xi, \xi)$ and $\{ \cdot \}_{mod\,  {\Xi}^\perp}$
taking those terms that involve $\sigma (e_i, e_i)$.
Since the multiplicity of both $\mu$ and $\nu$ is $2(m - 1)$, from the discussion about the formula (\ref{eq35}) and using (\ref{eq3}) and (\ref{eq65}) it follows that there is an orthonormal basis of principal directions of $\mathcal D$,
\begin{equation*}
\{e_i \} \cup \{J_1 e_i \} \cup \{J_2 e_i  \} \cup \{J_3 e_i \}, \quad i = 1, 2, \dots , m-1
\end{equation*}
so that $\{e_i \} \cup \{J_1 e_i \}$ spans $V_{\mu}$ and $\{J_2 e_i  \} \cup \{J_3 e_i \}$ spans $V_{\nu}, \; i = 1, 2, \dots , m-1$.
Then, using this basis for $\mathcal D$ and observing (\ref{eq61}), (\ref{eq63}) we compute
\begin{align*}
\{\Delta^2 \widetilde x  \}_{mod\,  {\Xi}^\perp} &=  -  2 \,  \Big\{ (4m + 3) c\sum_{\substack{{\tau = \mu , \nu}\\ e_i \in V_\tau}}  \sigma(e_i, e_i) + \sum_{\substack{{\tau = \mu , \nu}\\ e_i \in V_\tau}}  (f \tau + \tau^2)\,  \sigma (e_i, e_i) \Big\}\\
&= -  2 \,  \Big\{ (4m + 3) c \sum_{i = 1}^{m - 1}  4 \, \sigma(e_i, e_i) +  (f \mu + \mu^2)\sum_{i = 1}^{m - 1}  \,  2\,  \sigma (e_i, e_i) +  (f \nu + \nu^2)\sum_{i = 1}^{m - 1}  \,  2\,  \sigma (e_i, e_i)    \Big\} \\
&= - 2\, \Big \{4c (4m + 3) + 2 \, f (\mu + \nu) + 2 (\mu^2 + \nu^2)  \Big \} \sum_{i = 1}^{m - 1} \sigma (e_i, e_i)\\
&= - 8m (\alpha^2 + 4c) \, \sum_{i = 1}^{m - 1} \sigma (e_i, e_i) = - 2m (\alpha^2 + 4c) \, \sum_{e_j \in \mathcal D} \sigma (e_j, e_j) \, ,
\end{align*}
and, likewise, from (\ref{eq20}) and (\ref{eq49})
$$ \{\Delta \widetilde x  \}_{mod\,  {\Xi}^\perp} = -  \, \sum_{e_i \in \mathcal D } \sigma (e_i, e_i), \qquad  \Big\{ \widetilde x - \frac{I}{m + 1} \Big\}_{mod\,  {\Xi}^\perp} = - \frac c{8 (m + 1)} \sum_{e_i \in \mathcal D} \sigma (e_i, e_i).
$$
Therefore,  when $c = 1$, with values of $a$ and $b$ from (\ref{eq70}) and (\ref{eq71}) it follows that $mod\,  {\Xi}^{\perp}-$ component of the left-hand side of (\ref{eq74}) is zero and in a similar way, using (\ref{eq67}), (\ref{eq76}), \, $mod\,  {\Xi} \, -$ component is also zero, which means that equation (\ref{eq74}) is satisfied, showing that the immersion $\widetilde x$ is mass-symmetric and of 2-type. For the hypersurface $M$ given in $(i)$ we have 
$$ \lambda_u + \lambda_v = a = \frac{4 (m + 1)(4m - 1)}m , \qquad \lambda_u \lambda_v = b = \frac{32 (m + 1)^2(2m - 1)}m,
$$
hence $\lambda_u = 4 (m + 1)(2m - 1)/ m$ and $\lambda_v = 8 (m + 1)$ belong to the spectrum of $M$. Moreover, using
$$ \widetilde x_u = \frac 1{\lambda_u - \lambda_v}\Big[ \Delta \widetilde x - \lambda_v \Big(\widetilde x - \frac{I}{m + 1}  \Big) \Big], \qquad \widetilde x_v = \frac 1{\lambda_v - \lambda_u}\Big[ \Delta \widetilde x - \lambda_u \Big(\widetilde x - \frac{I}{m + 1}  \Big) \Big]
$$
and (\ref{eq49}) we find an explicit spectral resolution of immersion $\widetilde x$ into vector eigenfunctions:
\begin{align*}
\widetilde x_u &= -\,  \frac{\sqrt m}{2(m + 1)}\, \xi - \frac{m}{4 (m + 1)} \, \sigma (\xi , \xi)\\
\widetilde x_v &= \frac{\sqrt m}{2 (m + 1)} \, \xi + \frac{m - 2}{4 (m + 1)}\, \sigma (\xi , \xi) - \frac 1 {8(m + 1)} \sum_{e_i \in \mathcal D} \sigma (e_i, e_i).
\end{align*}
plus the center of mass $\widetilde x_0 = I/(m + 1).$
\end{proof}
One example of a  mass-symmetric 2-type hypersurface of class $B$ is obtained when we take $m = 3$ to get an 11-dimensional real hypersurface of $\mathbb HP^3$ which is the tube of radius $r = \pi/6$ about a canonically embedded $\mathbb CP^3$. When $m = 4$, the second value of $r$ gives an example of 15-dimensional hypersurface of 2-type in $\mathbb HP^4$, which is the tube of radius $\pi/6$ about $\mathbb CP^4$.
\par
Except for the two tubes given in Lemma 5, all other hypersurfaces of class $B$ in $\mathbb HP^m$ are of 3-type.
\vskip 0.3truecm
\begin{lemma}\label{Lemma 6}
A tube of any radius $r > 0$ about canonically embedded $\mathbb CH^m(- 4)$ in $\mathbb HH^m(4)$ is mass-symmetric and of 3-type. The same is true for a tube of any radius $r \in (0, \pi/4)$ about canonically embedded $\mathbb CP^m(4)$ in $\mathbb HP^m(4)$, except for the two tubes listed in Lemma 5 which are of 2-type.
\end{lemma}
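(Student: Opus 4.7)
The plan is to construct a 4-dimensional $\Delta$-invariant subspace $V$ of matrix-valued functions along $\widetilde x(M)$ containing $\widetilde x-I/(m+1)$, to compute $\Delta|_V$ explicitly, and to extract from it a degree-3 minimal polynomial for $\widetilde x-I/(m+1)$ whose three roots are distinct and positive outside the two exceptional radii of Lemma 5 (where the polynomial degenerates to degree 2). Mass-symmetry is built in: the constant subtracted from $\widetilde x$ in the annihilating relation is already $I/(m+1)$.

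For a class-$B$ hypersurface I would first fix a canonical triple $\{J_q\}$ with $U_q=-J_q\xi$ principal and principal curvatures $\alpha_1=\alpha$, $\alpha_2=\alpha_3=-4c/\alpha$ (cf.\ (\ref{eq61})), together with a principal orthonormal frame for $\mathcal D=V_\mu\oplus V_\nu$. Put
\[
B_0=\xi,\quad B_1=\sigma(\xi,\xi),\quad B_2=\sum_{e_i\in V_\mu}\sigma(e_i,e_i),\quad B_3=\sum_{e_j\in V_\nu}\sigma(e_j,e_j).
\]
Since $\sigma(U_q,U_q)=B_1$ by (\ref{eq10}) and $A$ is scalar on each of $V_\mu,V_\nu$, the sums $\sum_i\sigma(e_i,A^ke_i)$ for $k=0,1,2$ are constant-coefficient linear combinations of $B_1,B_2,B_3$. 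Substituting these into (\ref{eq20}) and (\ref{eq29}) places $\Delta\widetilde x$ and $\Delta^2\widetilde x$ in $V:=\mathrm{span}\{B_0,B_1,B_2,B_3\}$, while (\ref{eq49}) places $\widetilde x-I/(m+1)$ in $V$ with vanishing $B_0$-coefficient.

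Next I would compute $\Delta B_k$, $k=0,\ldots,3$, to assemble the $4\times 4$ matrix $L$ of $\Delta|_V$. The first row is obtained from (\ref{eq22}) (with $\nabla f=0$); the remaining rows are obtained by applying the Laplacian definition (\ref{eq18}) together with the parallelism identity
\[
\widetilde\nabla_X\sigma(Y,Z)=-\overline A_{\sigma(Y,Z)}X+\sigma(\overline\nabla_XY,Z)+\sigma(Y,\overline\nabla_XZ),
\]
combined with (\ref{eq9}), (\ref{eq12}), (\ref{eq15}), and the $V_\mu$--$V_\nu$ interchange recorded in (\ref{eq65}). Running through the Krylov sequence $\{\widetilde x-I/(m+1),\Delta\widetilde x,\Delta^2\widetilde x,\Delta^3\widetilde x\}$ in the 4-dimensional space $V$ then produces a linear relation
\[
\Delta^3\widetilde x-a_2\Delta^2\widetilde x+a_1\Delta\widetilde x-a_0\bigl(\widetilde x-\tfrac{I}{m+1}\bigr)=0
\]
with $a_0,a_1,a_2$ polynomial in $\alpha$ and $c$. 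Setting $P(t):=t^3-a_2t^2+a_1t-a_0$ and using (\ref{eq61})--(\ref{eq66}), I would factor $P(t)$ and check that its quadratic factor coincides with the 2-type polynomial $t^2-at+b$ of Lemma 5 precisely when $\alpha$ satisfies (\ref{eq73}); at those two radii the remaining linear factor cancels and one recovers the 2-type of Lemma 5, while at every other admissible radius the three roots of $P(t)$ are distinct and positive. Lagrange interpolation then converts $P(\Delta)(\widetilde x-I/(m+1))=0$ into a 3-type decomposition $\widetilde x=I/(m+1)+\sum_{j=1}^3\widetilde x_{\lambda_j}$ with $\Delta\widetilde x_{\lambda_j}=\lambda_j\widetilde x_{\lambda_j}$, yielding 3-type and mass-symmetry simultaneously. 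Reductions to 1-type or 2-type at non-exceptional radii are excluded by Lemma 5, Theorem 1, and the 1-type classification in \cite{Dim2}.

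The main obstacle will be the computation of $\Delta B_2$ and $\Delta B_3$: while $V_\mu$ and $V_\nu$ are $A$-invariant and interchanged by $S_2,S_3$ via (\ref{eq65}), they need not be parallel under the Levi-Civita connection on $M$, so a frame in $V_\mu$ can pick up $V_\nu$-components upon covariant differentiation. The key observation is that $B_2,B_3$ are intrinsic traces over $A$-eigenspaces, so the off-diagonal contributions to $\Delta B_2$ and $\Delta B_3$ cancel after invoking the Codazzi equation (\ref{eq16}) together with the interchange pattern (\ref{eq65}). Once this closure has been checked, the remainder of the argument is mechanical polynomial algebra in $\alpha$.
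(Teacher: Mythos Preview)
Your strategy is essentially the paper's: compute iterated Laplacians inside a finite-dimensional $\Delta$-invariant span and verify a mass-symmetric cubic relation, then exclude lower types via Lemma~5, Theorem~1, and the 1-type classification. The one simplification you miss dissolves your ``main obstacle'' entirely: by (\ref{eq10}) one has $\sigma(J_qX,J_qY)=\sigma(X,Y)$, and since (as in the proof of Lemma~5) $V_\mu$ is spanned by $\{e_i,J_1e_i\}_{i=1}^{m-1}$ while $V_\nu$ is spanned by $\{J_2e_i,J_3e_i\}_{i=1}^{m-1}$, both $B_2$ and $B_3$ equal $2\sum_{i=1}^{m-1}\sigma(e_i,e_i)$. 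So $B_2=B_3$ and your $V$ is only 3-dimensional, spanned by $\xi$, $\sigma(\xi,\xi)$, and $\sum_{e_i\in\mathcal D}\sigma(e_i,e_i)$; this is exactly the space the paper works in. Consequently there is no need to compute $\Delta B_2$ and $\Delta B_3$ separately or to track cross-terms from non-parallel eigenbundles: the paper obtains $\Delta\bigl(\sum_{e_i\in\mathcal D}\sigma(e_i,e_i)\bigr)$ directly from the general identity (\ref{eq24}), computes $\Delta(\sigma(\xi,\xi))$ and $\Delta\xi$ by hand (equations (\ref{eq77})--(\ref{eq79})), iterates to $\Delta^3\widetilde x$ (equation (\ref{eq81})), and then reads off the explicit coefficients $p,q,r$ of the cubic equation in $\Delta$. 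Your Krylov/interpolation packaging is a clean way to phrase the same computation.
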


\begin{proof}
The proof goes by direct computation of iterated Laplacians and matching corresponding componnets of 3-type mass-symmetric equation. We begin by computing the Laplacian of $\sigma(\xi, \xi)$. Since
$$ \widetilde\nabla_X \sigma(\xi, \xi) = - 2 c\,  [X + \sum_{q = 1}^3 \langle U_q, X\rangle U_q ] - \, 2\, \sigma (AX, \xi),
$$
by repeated use (\ref{eq11}) and (\ref{eq12}) we further compute
\begin{align*}
\Delta (\sigma (\xi, \xi)) = &\sum_{i = 1}^n \Big[ \widetilde\nabla_{\nabla_{e_i}e_i} \sigma (\xi , \xi) - \widetilde\nabla_{e_i}\widetilde\nabla_{e_i} \sigma (\xi , \xi) \Big ]  \\
= & \; 2 c \Big[ f  + \sum_q \alpha_q \Big]\,  \xi + 2 c \sum_{q = 1}^3 \Big[ \nabla_{U_q} U_q + \sum_{i = 1}^n \langle  \nabla_{e_i} U_q, \, e_i\rangle\, U_q \Big]\\
& - \, 2 \,\sum_{i = 1}^n \, \overline A_{\sigma (A e_i , \xi) e_i} + 2 (3 c + f_2)\, \sigma (\xi , \xi)\\
&\;  +\, 2 \, \sum_{i = 1}^n \sigma ((\nabla_{e_i}A)e_i , \, \xi) + 2 c \, \sum_{i = 1}^n \sigma (e_i, e_i) - 2 \sum_{i = 1}^n \sigma (A e_i, A e_i).
\end{align*}
Then employing (\ref{eq15}) and (\ref{eq16}) we find
$$  \sum_{q = 1}^3 \Big[ \nabla_{U_q} U_q + (\text{div}\, U_q)\, U_q \Big] = 0, \qquad \sum_{i = 1}^n (\nabla_{e_i}A) e_i = \text{tr} (\nabla A) = \nabla f = 0
$$
and using (\ref{eq9}) and (\ref{eq62}) - (\ref{eq64}) arrive at
\begin{equation}\label{eq77}
\Delta (\sigma (\xi , \xi)) = 4c \, \Big(\alpha - \frac{8 c}{\alpha} \Big)\, \xi + 4\, [(2m + 1) c + (m - 1)\alpha^2]\, \sigma (\xi , \xi) - \alpha^2 \, \sum_{e_i \in \mathcal D} \sigma (e_i, e_i).
\end{equation}
For a curvature-adapted hypersurface we have $\sum_q  S_q A U_q = 0$. Using $(\ref{eq61}) - (\ref{eq65}) $ combined with (\ref{eq77}), formula (\ref{eq24}) yields
\begin{align}\label{eq78}
\sum_{e_i \in \mathcal D} \Delta (\sigma (e_i , e_i)) = & \; 8 c (m - 1)\Big[(2m + 3) \alpha - \frac{8 c}{\alpha}  \Big] \, \xi - 8 (m - 1)(c + 2 \alpha^2) \, \sigma (\xi , \xi) \nonumber\\
& \;\; + 4\, \Big[ 2 c (m + 1) + \alpha^2 \Big] \, \sum _{e_i \in \mathcal D} \sigma (e_i,  e_i) .
\end{align}
Further, the expressions (\ref{eq20}), (\ref{eq22}) and (\ref{eq29}) reduce, respectively, to (\ref{eq75}) and

\begin{equation}\label{eq79}
\Delta \xi =   \; \Big[ (2m - 1) \alpha^2 + \frac{32}{\alpha^2} + 8 c (m -1)  \Big] \, \xi  - \Big[(2m - 3) \alpha + \frac{8 c}{\alpha}  \Big] \, \sigma (\xi , \xi) + \alpha \, \sum_{e_i \in \mathcal D} \sigma (e_i,  e_i), 
\end{equation}

\begin{align}\label{eq80}
\Delta^2 \widetilde x = & \; - \Big[ (2m - 1)^2 \alpha^3 + 4 c (8 m^2 - 8 m + 1)\, \alpha - \frac{64 m}{\alpha} - \frac{256 c}{\alpha^3}  \Big] \, \xi \nonumber\\
& \; + \Big[(4 m^2 - 4 m - 1) \alpha^2 - \frac{64}{\alpha^2} - 4 c (4 m + 1)  \Big] \, \sigma (\xi , \xi) \\
& \; - 2 m (\alpha^2 + 4 c) \sum_{e_i \in \mathcal D} \sigma (e_i,   e_i). \nonumber
\end{align}
Taking the Laplacian of (\ref{eq80}) and using (\ref{eq77}) - (\ref{eq79}) we have

\begin{align}\label{eq81}
\Delta^3 \widetilde x = & \; - \Big\{(2m - 1)^3 \alpha^5 + 8 c\, (16 m^3 - 20 m^2 + 6m  - 1)\, \alpha^3    \nonumber\\
& \qquad\quad + 16 \, (24 m^3 - 28 m^2 + 6 m  - 1)\, \alpha - \frac{512 c (2m - 1)}{\alpha} - \frac{4096 m}{\alpha^3} - \frac{8192 c}{\alpha^5} \Big\} \; \xi \nonumber\\
& \;  + \Big\{ (24 m^3 - 20 m^2 - 6m + 1)\, \alpha^4 + 8 c \, (12 m^3 - 8 m^2 - 6 m + 1)\, \alpha^2 \\
&\qquad\quad - \frac{512 c (3m - 1)}{\alpha^2} - \frac{2048}{\alpha^4} + 16\, (4 m^2 - 30 m + 17)\Big\} \; \sigma (\xi , \xi) \nonumber\\
& \; - \Big\{ 8 m^2 \alpha^4 + 48 c m^2 \alpha^2 - \frac{256 c}{\alpha^2} + 64 (m^2 - 1) \Big\}\, \sum_{e_i \in \mathcal D} \sigma (e_i , e_i). \nonumber
\end{align}
It is now a direct verification that $\widetilde x$ satisfies the equation for a mass-symmetric 3-type hypersurface, viz.
$$ \Delta^3 \widetilde x + p \Delta^2 \widetilde x + q \Delta \widetilde x + r \Big(\widetilde x - \frac{I}{m + 1}  \Big) = 0
$$
where 
$$p = - \frac 1{\alpha^2} (\alpha^2 + 4 c) [(6 m - 1) \alpha^2 + 8 c] , \;  \; r = - \frac{16 c}{\alpha^2}\, (2 m^2 + m - 1)(\alpha^2 + 4 c)^2 (m \alpha^2 + 4 c)
$$.
$$  q = \frac 1{\alpha^2} (\alpha^2 + 4 c) [4m (2m - 1) \alpha^4 + 8 c (6 m^2 + 2 m - 1) \alpha^2 + 32 (4m + 1)].
$$
Given that there are no 1-type hypersurfaces of this kind, when we eclude those two 2-type examples from Lemma 5, we are left with all the other such hypersurfaces which must be of type 3.
\end{proof}
\par
Combining together the information from Lemmas 2-5 we obtain
\begin{theorem}\label{thm2}
Let $M^{4m - 1}$ be a curvature adapted real hypersurface of $\mathbb HP^m(4), \ m \geq 2.$ Then  $M$ is of 2-type in $\bold H (m + 1)$ via $\Phi$ if and only if $M$ is locally congruent to an open portion of one of the following
\begin{enumerate}  
\item [\it{(i)}]\,  A geodesic hypersphere of $\mathbb HP^m(4)$ of any radius $r \in (0, \pi/2), \; r \neq \cot^{-1}\sqrt{3/ (4m + 1)}$;

\item [\it{(ii)}] \,  The tube of radius $r = \cot^{-1}\sqrt{\frac {k+1}{m-k}}$ about a canonically embedded, totally geodesic  $\, \mathbb HP^k(4) \subset \mathbb HP^m(4),$ 
for any $k = 1, 2, ..., m-2$;

\item [\it{(iii)}] \, The tube of radius $r = \cot^{-1}\sqrt{\frac{4k+3}{4(m-k)+ 1}} \,$ about a canonically embedded, totally geodesic  $\, \mathbb HP^k(4) \subset \mathbb HP^m(4),$ 
for any $k = 1, 2, ..., m-2;$

\item [\it{(iv)}] \, The tube of radius $r = \frac 1 2 \cot^{-1}(1/\sqrt m) \, $  about a canonically embedded, totally geodesic  $\, \mathbb CP^m(4) \subset \mathbb HP^m(4);$ 

\item [\it{(v)}] \, The tube of radius $r = \frac 1 2\cot^{-1}\sqrt{(3 + \sqrt{96 m^2 - 15})\Big/ 2 (4m^2 - 1)}\, $ about a canonically embedded, totally geodesic  $\, \mathbb CP^m(4) \subset \mathbb HP^m(4) $. 

\end{enumerate}
\end{theorem}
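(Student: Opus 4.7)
The plan is to assemble Theorem 2 as a direct corollary of Lemmas 3, 4, and 5 together with Berndt's structure theorem, reducing the problem to the three mutually exclusive model classes that exhaust curvature-adapted hypersurfaces of $\mathbb{HP}^m(4)$.

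First I would invoke the fact (proved in \cite{Ber}) that a curvature-adapted real hypersurface of $\mathbb{HP}^m(4)$ automatically has locally constant principal curvatures, so no additional hypothesis is required beyond curvature-adaptedness. Hence $M$ must be locally congruent to an open portion of one of the model hypersurfaces appearing in the projective columns of Table 1, namely a tube $P_1^k(r)$ of class $A$ (with $k\in\{0,1,\ldots,m-1\}$ and $r\in(0,\pi/2)$) or a tube $P_2(r)$ of class $B$ (with $r\in(0,\pi/4)$). I would then also note that the proviso $\alpha_q^2+4c\neq 0$ of Lemma 1 is trivially satisfied in the projective case $c=1$, so no model hypersurface is excluded a priori from the analysis of Section 4.

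Next I would split the argument into the three subclasses and apply the corresponding lemma in each. For $k=0$ (class $A_1$, geodesic hyperspheres) Lemma 3(i) shows that the 2-type condition holds for every radius $r\in(0,\pi/2)$ for which the hypersurface is not already of 1-type; combining with the list of 1-type hypersurfaces from \cite{Dim2} identifies the exceptional radius as $r=\cot^{-1}\sqrt{3/(4m+1)}$, yielding case (i). For $1\leq k\leq m-2$ (class $A_2$, tubes about $\mathbb{HP}^k(4)$), Lemma 4(i) reduces the consistency conditions $(C_1)$--$(C_3)$ to the cubic-in-$\mu^2$ equation whose factorization (\ref{eq55}) exhibits exactly the two admissible values $\mu^2=(k+1)/(m-k)$ and $\mu^2=(4k+3)/(4(m-k)+1)$, producing the two families in (ii) and (iii); the third root simply relabels (iii) by swapping the alternate focal varieties $\mathbb{HP}^k$ and $\mathbb{HP}^{m-k-1}$. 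For class $B$ (tubes about $\mathbb{CP}^m(4)$), Lemma 5 isolates the sextic (\ref{eq73}) and factors it as $(m\alpha^2-4)[(4m^2-1)\alpha^4-12\alpha^2-96]$; the two positive roots $\alpha^2=4/m$ and $\alpha^2=(6+2\sqrt{96m^2-15})/(4m^2-1)$ give precisely the radii in cases (iv) and (v), while the remaining root is negative and discarded.

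The converse — that each of the hypersurfaces in (i)--(v) is indeed of 2-type via $\widetilde x$ — is already contained in the respective lemmas, which explicitly exhibit constants $a$ and $b$ satisfying (\ref{eq26}) and verify that the associated polynomial $P(t)=t^2-at+b$ has two distinct real roots $\lambda_u\neq\lambda_v$. Since no computation is needed beyond what is established in the lemmas, the main obstacle here is purely organizational: one must be careful that Berndt's trichotomy leaves no curvature-adapted hypersurface outside classes $A_1$, $A_2$, $B$, and that each admissible value of the root of the consistency polynomial corresponds to a genuine hypersurface of the appropriate radius in the prescribed open interval (in particular that $\frac{1}{2}\cot^{-1}(1/\sqrt m)$ and $\frac{1}{2}\cot^{-1}\!\sqrt{(3+\sqrt{96m^2-15})/2(4m^2-1)}$ both lie in $(0,\pi/4)$ for $m\geq 2$), and that the avoided radius in (i) is indeed the unique 1-type one.
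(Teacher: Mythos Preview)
Your proposal is correct and follows essentially the same route as the paper: invoke Berndt's theorem that curvature-adapted hypersurfaces of $\mathbb{HP}^m(4)$ have constant principal curvatures and hence lie in the projective portion of Table~1, then apply Lemmas~3--5 to the classes $A_1$, $A_2$, $B$ in turn. The paper's own proof is a two-line summary of exactly this; your version simply spells out the case analysis in more detail (one minor point: you list $k\in\{0,\dots,m-1\}$ but only treat $k=0$ and $1\le k\le m-2$ explicitly --- the case $k=m-1$ is absorbed into $k=0$ via $P_1^{m-1}(r)=P_1^0(\pi/2-r)$, which you might mention for completeness).
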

\begin{proof}
As proved in \cite{Ber}, a curvature-adapted hypersurface of $\mathbb HP^m(4)$ has constant principal curvatures and is to be found in the left portion of Table 1. Then, the Lemmas 3-5 finish the proof.
\end{proof}
Likewise,

\begin{theorem}\label{thm3}
Let $M^{4m - 1}$ be a curvature adapted real hypersurface of $\mathbb HH^m(- 4)$ with constant principal curvatures. Then  $M$ is of 2-type in $\bold H^1 (m + 1)$ via $\Phi$ if and only if $M$ is locally congruent to an open portion of one of the following
\begin{enumerate}  
\item [\it{(i)}]\,  A geodesic hypersphere of $\mathbb HH^m (- 4)$ of any radius $r > 0$;

\item [\it{(ii)}] \,  The tube of arbitrary radius $r > 0 $ about a totally geodesic quaternionic  hyperbolic hyperplane $\mathbb HH^{m-1}(- 4) $ of $ \mathbb HH^m(- 4)$. 
\end{enumerate}

\end{theorem}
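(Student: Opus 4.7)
The plan is to invoke the full classification of curvature-adapted hypersurfaces with constant principal curvatures in $\mathbb{HH}^m(-4)$ due to Berndt (which corresponds to the right portion of Table 1), and then eliminate the non 2-type cases one by one using the preceding lemmas. Concretely, every such hypersurface is locally congruent to an open portion of exactly one of the following: the horosphere $H_3$ of class $A_0$; a geodesic hypersphere $H_1^0(r)$ of class $A_1'$; a tube $H_1^{m-1}(r)$ about $\mathbb{HH}^{m-1}$ of class $A_1''$; a tube $H_1^k(r)$ about $\mathbb{HH}^k$ for some $1\leq k\leq m-2$ of class $A_2$; or a tube $H_2(r)$ about $\mathbb{CH}^m$ of class $B$.

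First I would observe that the horosphere $H_3$ is excluded directly by Lemma \ref{lem2}, which shows $\Delta^2\widetilde x = \text{const}\neq 0$ and therefore $H_3$ is not of any finite type, in particular not of 2-type. Next I would cite Lemma \ref{lem3}(i) to confirm that both classes $A_1'$ and $A_1''$ are genuinely of 2-type via $\widetilde x$: the computation there produces explicit values
\[
a = (\mu^2 - 1)\Bigl(3n+2 - \tfrac{3}{\mu^2}\Bigr),\qquad b = 2(n+1)\Bigl[n\mu^4 - (2n+3)\mu^2 - \tfrac{3}{\mu^2} + (n+6)\Bigr],
\]
together with two distinct real roots $\lambda_u,\lambda_v$ of $P(t)=t^2-at+b$, so conditions $(C_1)$--$(C_4)$ of Lemma \ref{Lem1} are fulfilled and the $c=-1$ version of the argument is identical.

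The two remaining cases are excluded by the hyperbolic halves of Lemmas \ref{Lemma 4} and \ref{Lemma 5}. For class $A_2$, Lemma \ref{Lemma 4}(i) showed that consistency of $(C_1)$--$(C_3)$ forces the sextic factorization (\ref{eq55}), whose three candidate solutions all require $\mu^2 = \pm(\text{positive})\cdot c$ with $c=-1$, and hence have no real roots with $\mu=\coth r$ or $\mu=\tanh r$; this rules out every tube about $\mathbb{HH}^k$ with $1\leq k\leq m-2$. For class $B$, Lemma \ref{Lemma 5} showed that consistency forces $\alpha^2 \in\{4c/m,\,(6c\pm 2\sqrt{96m^2-15})/(4m^2-1)\}$, none of which is compatible with $\alpha^2 = 4\coth^2(2r) > 4$ and $c=-1$; thus no tube about $\mathbb{CH}^m$ is of 2-type.

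Combining these exclusions with the positive statements in Lemma \ref{lem3}(i), the only remaining possibilities are precisely cases (i) and (ii) in the statement, which finishes the proof. The argument is essentially an assembly of Lemmas \ref{lem2}--\ref{Lemma 5}; the substantive work lies in those lemmas, and the main technical obstacle (the constancy of principal curvatures, which in the projective case was automatic) is here bypassed simply by the standing hypothesis. Compared to Theorem \ref{thm2}, the list collapses dramatically because, in the hyperbolic setting, the sign of $c$ obstructs the algebraic conditions required for the $A_2$ and $B$ classes to satisfy $(C_1)$--$(C_3)$.
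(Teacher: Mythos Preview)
Your proposal is correct and follows essentially the same approach as the paper: the paper proves Theorem~\ref{thm3} simply by the word ``Likewise,'' relying on Berndt's classification (the right portion of Table~1) together with Lemmas~\ref{lem2}--\ref{Lemma 5}, which is exactly the case-by-case elimination you carry out. You have in fact supplied more detail than the paper itself, but the logical structure is identical.
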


\begin{theorem}\label{thm4}
Among 2-type hypersurfaces of $\mathbb HQ^m(4 c)$ listed in Theorems 2 and 3

\begin{enumerate}  
\item [\it{(i)}]\,  The only ones that are of 2-type and mass-symmetric in an appropriate hyperquadric of $\mathbf H^{(1)} (m + 1)$ centered at $I/(m + 1)$ that contains them are open portions of: \, (1)\, geodesic hyperspheres of $\mathbb HP^m (4)$ of radius $r = \cot^{- 1}(\sqrt{1/ m})$ (2)\, the tubes of radius $r = \cot^{-1}\sqrt{\frac {k+1}{m-k}}$ about a canonically embedded, totally geodesic  $\, \mathbb HP^k(4) \subset \mathbb HP^m(4),$ 
for any $k = 1, 2, ..., m-2$ and (3)\, the tubes of radii $r = \frac 1 2 \cot^{-1}(1/\sqrt m) \, $ and $r = \frac 1 2\cot^{-1}\sqrt{(3 + \sqrt{96 m^2 - 15})\Big/ 2 (4m^2 - 1)}\, $ about a canonically embedded, totally geodesic  $\, \mathbb CP^m(4) \subset \mathbb HP^m(4).$ 

\item [\it{(ii)}] \,  The 2-type hypersurfaces that are minimal in $\mathbb HQ^m(4c)$ exist only in the quaternionic projective space and are open portions of geodesic spheres of radius $r = \cot^{-1}(\sqrt{3/(4 m - 1)})$ and tubes of radius $r = \pi/4$ around canonically  embedded $\mathbb HP^k (4)$ in $\mathbb HP^{2 k + 1}(4)$ when the quaternionic dimension $m = 2k + 1$ is odd.
\end{enumerate}

\end{theorem}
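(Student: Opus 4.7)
The proof of Theorem 4 proceeds as a case-by-case inspection of the five hypersurface types listed in Theorem 2 and the two listed in Theorem 3, using the principal-curvature data recorded in Table 1.

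For part (i) on mass-symmetry, no genuinely new computation is needed: the three preceding lemmas have already done the bookkeeping inside their proofs. Lemma 3(ii) selects the geodesic hyperspheres of radius $r = \cot^{-1}(1/\sqrt m)$ in $\mathbb HP^m(4)$ from the class-$A_1$ family; Lemma 4(ii) selects the first family $r = \cot^{-1}\sqrt{(k+1)/(m-k)}$ of tubes about $\mathbb HP^k(4)$ from class $A_2$; and the verification in the proof of Lemma 5 was carried out directly in the mass-symmetric form $\Delta^2\widetilde x - a\Delta\widetilde x + b(\widetilde x - I/(m+1)) = 0$, forcing both class-$B$ tubes about $\mathbb CP^m(4)$ to be mass-symmetric. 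Collecting these three sub-statements, together with the observation that by Theorems 2 and 3 the classification is exhausted, yields (i).

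For part (ii) I would set $f = \text{tr}\,A = 0$ case by case using Table 1. For a geodesic hypersphere, $f = n\mu - 3c/\mu$ with $\mu = \cot_c r$, and $f = 0$ forces $\mu^2 = 3c/n$, producing the single radius $r = \cot^{-1}\sqrt{3/(4m-1)}$ in $\mathbb HP^m(4)$ and no real solution in $\mathbb HH^m(-4)$. For a tube of radius $r$ about $\mathbb HH^{m-1}(-4)$, the identity $2\coth(2r) = \tanh r + \coth r$ gives $f = n\tanh r + 3\coth r > 0$, so minimality is impossible. For the $A_2$-tubes of Theorem 2 one has $f = L\mu - K/\mu$ with $K = 4k+3$ and $L = 4(m-k-1)+3$; substituting $\mu^2 = (K+1)/(L+1)$ (case (ii) of Theorem 2) reduces $f = 0$ to $L = K$, hence $m = 2k+1$, $\mu = 1$, and $r = \pi/4$, while $\mu^2 = K/(L+2)$ (case (iii)) reduces $f = 0$ to the impossible identity $L = L+2$. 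Finally, for the two class-$B$ tubes about $\mathbb CP^m(4)$, $f = (2m-1)\alpha - 8/\alpha$ with $c = 1$, and $f = 0$ amounts to $(2m-1)\alpha^2 = 8$; substituting the two values $\alpha^2 = 4/m$ and $\alpha^2 = (6 + 2\sqrt{96m^2-15})/(4m^2-1)$ from Lemma 5 gives, respectively, $8m = 8m - 4$ in the first case and, after isolating the radical and squaring, $2m^2 - m - 1 = 0$ in the second.

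The computations are essentially routine substitutions into Table 1; the only step requiring any real care is the last, namely ruling out minimality for the more complicated class-$B$ tube. That reduces to the quadratic $2m^2 - m - 1 = (2m+1)(m-1) = 0$, whose only integer root is $m = 1$, which lies outside our standing assumption $m \geq 2$. Assembling these sign and arithmetic checks with the classification in Theorems 2 and 3 produces exactly the list in (ii), completing the proof.
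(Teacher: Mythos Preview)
Your proof is correct and follows essentially the same case-by-case strategy as the paper. Two minor differences are worth noting. First, for the class-$A_2$ case in part~(ii), the paper invokes the structural identity (53), $fa = f[f_2 + c(3n+23)] + 4c\alpha$, which under $f=0$ forces $\alpha = 0$ directly and hence $\mu^2 = c$, $r = \pi/4$, $K = L$; your direct substitution of the two specific radii from Theorem~2 into $f = L\mu - K/\mu$ is an equally valid and arguably more elementary route. Second, the paper's own proof never explicitly rules out minimality for the two class-$B$ tubes of Lemma~5, whereas you do check this case and correctly reduce it to $(2m+1)(m-1)=0$, excluded by $m\geq 2$; in this respect your argument is actually more complete than the paper's.
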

\begin{proof}
The proof of part $(i)$ is contained in Lemmas 3-5. To prove part $(ii)$ we explot the information given in Table 1. Regarding class$-A_1$ hypersurfaces when $k = 0$ (geodesic spheres) we have $\mu = \cot_c r, \, m(\mu) = 4 (m - 1)$ and $\alpha = 2 \cot_c (2r), \, m(\alpha) = 3$ as principal curvatures, hence 
$$f = \text{tr}\, A = 4 (m - 1) \cot_c r + 6 \cot_c(2r) = (4 m - 1)\cot_c r - 3c \tan_c r,
$$
due to $2 \cot_c(2 r) = \cot_c r - c \tan_c r$.
So, $\text{tr}\, A$ can be zero only when $c = 1$ and $\cot^2 r= 3/(4m - 1) = 3/n$, yielding the geodesic sphere in $\mathbb HP^m(4)$ of radius $r = \cot^{-1} \sqrt{3/(4m - 1)} $ which is indeed of 2-type and minimal. If $k = m - 1$ in the projective case $P_1^{m - 1}(r) = P_1^0 (\frac{\pi}2 - r)$ is still a geodesic sphere, yielding the same value of the radius as above. In the hyperbolic space the principal curvatures are $\nu = \tanh r$ and $\alpha = 2 \coth (2 r)$ in which case $\text{tr}\, A = (4 m - 1) \tanh r + 3 \coth r$ cannot be zero, so no minimal hypersurface of this kind. Searching for minimal 2-type examples among class$-A_2$ hypersurfaces  we see that formula (\ref{eq53}) implies that $\alpha = \mu + \nu = \mu - \frac c{\mu} = 0$, i. e. $\mu^2 = c$. So when $c =  - 1$ we cannot have a minimal 2-type hypersurface in $\mathbb HH^m$ of class $A_2$. In the projective case it follows that $\mu = \cot r = 1$ and $\nu = - 1$ so $r = \pi/4$ and $f = L\mu + K \nu = L - K = 0$, implying that $l = k = \frac{m - 1}2$
so $m = 2 k + 1$ is necessarily odd, resulting in class$-A_2$ hypersurface $M^{\mathbb H}_{k, k} (\pi/4) = \pi \Big(S^{4 k + 3}(1/2) \times S^{4k + 3}(1/2) \Big)$, which is indeed minimal and of 2-type  as first observed by Chen \cite[1st ed, pp. 265-266 ]{Che}, using a different method. It is the tube of radius $\pi/4$ about a canonically embedded totally geodesic $\mathbb HP^k (4) \subset \mathbb HP^{2k + 1}(4)$. Our argument shows,  that it is in fact the only 2-type minimal hypersurface of $\mathbb HQ^m(4c)$ among complete class$-A_2$ hypersurfaces. So, minimal 2-type hypersurfaces of class $A_2$ exist only in the projective space and only when $m = 2k + 1$ is odd and they are congruent to (an open portion of) the hypersurface  $M^{\mathbb H}_{k, k}(\pi/4)$ described above, which also happens to be mass-symmetric according to Lemma 4.
\end{proof}
Curvature-adapted assumption on a hypersurface is seemingly a strong condition, at least in the $\mathbb HP^m$ case, since it implies constant principal curvatures. On the other hand, one does not need this, or any other assumption if the number of principal curvatures of a hypersurface of $\mathbb HQ^m$ is restricted to be at most two. Namely, when $m \geq 3$, Martinez and P\'erez showed in $\mathbb HP^m$ case and Ortega and P\'erez in $\mathbb HH^m$,  that a hypersurface with at most two distinct principal curvatures at each point  must be an open subset of one of the following:
 (1)\, A geodesic hypersphere (2) \, A tube over totally geodesic $\mathbb HH^{m - 1}(- 4)$
 in $\mathbb HH^m(- 4)$ (3)\, A horosphere $H_3$ in $\mathbb HH^m (- 4)$.
\par
Hence,
\vskip 0.3truecm\noindent
{\bf Corollary 1.} If $M$ is a real hypersurface of $\mathbb HQ^m(4c), \, m \geq 3$, with at most two distinct principal curvatures at each point and $M$ is not a horosphere, then $M$ is of Chen type 1 when it is (an open portion of) a geodesic hypersphere of radius
 $\cot^{-1}(3/\sqrt{4m+ 1})$ in $\mathbb HP^m(4)$ or, otherwise, of Chen type 2.

\vskip 0.2truecm
\hskip 1.4truecm {\it Ivko Dimitri\'c}
\vskip 0.1truecm
\hskip 1truecm {\sc  Pennsylvania State University Fayette}
\vskip 0.1truecm
\hskip 1truecm {\sc  2201 University Drive,}
\vskip 0.1truecm
\hskip 1truecm {\sc  Lemont Furnace, PA 15456, USA.}
\vskip 0.1truecm
\hskip 1truecm \it Email: \; {\it ivko\symbol{"40}psu.edu}


\begin{thebibliography}{10}

\bibitem{AM1} T. Adachi, S. Maeda, {\it Curvature-adapted real hypersurfaces in quaternionic space forms}, {\sl Kodai Math. J.}, {\bf 24} (2001), 98--119.

\bibitem{AM2} T. Adachi, S. Maeda, {\it Quaternionic distribution of curvature-adapted real hypersurfaces in a quaternionic hyperbolic space}, {\sl J. Geom.}, {\bf 75} (2002), 1--14.

\bibitem{BdCE} J. L. Barbosa, M. do Carmo, and J. Eschenburg, {\it Stability of hypersurfaces of constant mean curvature in Riemannian 
manifolds}, {\sl Math. Z.}, {\bf 197} (1988), 123--138.

\bibitem{Ber}  J. Berndt, {\it Real hypersurfaces in quaternionic space forms}, {\sl  J. reine
angew. Math.}, {\bf 419} (1991), 9--26.

\bibitem{BV} J.  Berndt and L. Vanhecke, {\it Curvature-adapted submanifolds}, 
{\sl Nihonkai Math. J.}, {\bf 3} (1992), 177--185.  

\bibitem{CR} T. E. Cecil and P. J. Ryan, {\sl Geometry of Hypersurfaces}, Springer Monographs in Mathematics, Springer Verlag, 2015.

\bibitem{Che1} B. Y. Chen, {\it On the first eigenvalue of Laplacian of compact minimal submanifolds of rank one symmetric spaces}, {\sl Chinese J. Math.}, {\bf 11} (1983), 259--273.

\bibitem{Che} B.  Y.  Chen, {\sl Total Mean Curvature and
Submanifolds of Finite Type}, 2nd ed., (2018) World Scientific, Singapore, (1st ed 1984).
 
\bibitem{CP} B. Y. Chen and M. Petrovic, {\it On spectral decomposition 
of immersion of finite type},
{\sl Bull. Austral. Math. Soc.}, {\bf 44} (1991), 117-129.

\bibitem{Dat} J. E. D'Atri, {\it Certain isoparametric families of hypersurfaces in symmetric spaces}, {\sl J. Differential Geom.}, {\bf 14} (1979), 21-40.

\bibitem{Dim1} I.  Dimitri\'c, {\it 1-Type submanifolds of
non-Euclidean complex space forms},  {\sl  Bull.  Belg. Math. Soc.
Simon Stevin}, {\bf 4} (1997), 673--684.

\bibitem{Dim2} I. Dimitri\'c, {\it CR-Submanifolds of $\mathbb HP^m$ and hypersurfaces of the Cayley plane whose Chen-type is 1},
{\sl Kyungpook Math. J.}, {\bf 40} (2000), 407--429.

\bibitem{Dim4} I. Dimitri\'c, {\it Low-type submanifolds of real space forms via the immersions by projectors}, 
{\sl Diff. Geom. Appl.}, {\bf 27} (2009), 507--526.

\bibitem{Dim5} I.  Dimitri\'c, {\it  Hopf hypersurfaces of low type in
non-flat complex space forms}, {\sl Kodai Math. J.}, {\bf 34} (2011), 202--243.

\bibitem{GR} O. J.  Garay and A.  Romero, {\it An isometric
embedding of the complex hyperbolic space in a pseudo-Euclidean
space and its application to the study of real hypersurfaces},
{\sl  Tsukuba J.  Math.}, {\bf 14} (1990), 293--313.

\bibitem{Ish} S. Ishihara, {\it Quaternion K\"ahler manifolds}, {\sl J. Differential Geometry}, {\bf 9} (1974), 483--500.

\bibitem{KN} K. Kaneko, S. Nagai, {\it Characterization of some types of curvature-adapted real hypersurfaces in a nonflat quaternionic space form}, {\sl Toyama Math. J.}, {\bf 42} (2021), 137--153.

\bibitem{LPS} S. H. Lyu, J. D. P\'erez, Y. J. Suh, {\it On real hypersurfaces in a quaternionic hyperbolic space in terms of the derivative of the second fundamental tensor}, {\sl Acta Math. Hungar.}, {\bf 97} (1-2) (2002), 145-172.

\bibitem{MR} A. Martinez, A. Ros, {\it On real hypersurfaces of finite type in $\bold CP^m$}, {\sl Kodai Math. J.}, {\bf 7} (1984), 304--316.

\bibitem{MP} A.  Martinez, J. D. P\'erez, {\it Real hypersurfaces in quaternionic projective space}, 
{\sl Ann. Math. Pura Appl.} (IV),  {\bf 145} (1986), 355--384.

\bibitem{OP} M. Ortega, J. D. P\'erez, {\it On the Ricci tensor of a real hypersurface of quaternionic hyperbolic space}, {\sl Manuscripta Math.}, {\bf 93} (1997), 49--57.

\bibitem{Pak} J. S. Pak, {\it Real hypersurfaces in quaternionic Kaehlerian manifolds with constant $Q$-sectional curvature}, {\sl K$\bar{\text o}$dai Math. Sem. Rep. }, {\bf 29} (1977), 22-61.

\bibitem{PS} J. D. P\'erez, F. G. Santos, {\it On pseudo-Einstein real hypersurfaces of the quaternionic projective space}, {\sl Kyungpook Math. J.}, {\bf 25} (1985), 15--28.

\bibitem{Ros1} A.  Ros, {\it Spectral geometry of CR-minimal
submanifolds in the complex projective space}, {\sl Kodai Math.
J.}, {\bf 6} (1983), 88--99.

\bibitem{Ros2} A.  Ros, {\it On spectral geometry of Kaehler
submanifolds}, {\sl  J. Math. Soc. Japan}, {\bf 36}, (1984), 433-448.

\bibitem{Sak} K. Sakamoto, {\it Planar geodesic immersions}, {\sl Tohoku Math J.}, {\bf 29} (1977), 25--56.

\bibitem{She} Y.-B. Shen, {\it On spectral geometry of minimal surfaces in
$\Bbb CP^m $}, {\sl Trans. Amer. Math. Soc.},  {\bf  347} (1995),
3873--3889.

\bibitem{Tai}  S.S. Tai, {\it  Minimum imbeddings of compact
symmetric spaces of rank one}, {\sl J.  Diff.  Geometry}, {\bf 2} (1968),
55--66.

\bibitem{Uda2} S.  Udagawa, {\it Spectral geometry of Kaehler
submanifolds of a complex projective space}, {\sl J.  Math. Soc.
Japan},   {\bf 38} (1986), 453--472.

\bibitem{Uda3} S. Udagawa, {\it Einstein-parallel Kaehler submanifolds in a complex projective space}, {\sl Tokyo J. Math},
 {\bf 9} (1986), 335--340.

\bibitem{Uda4} S. Udagawa, {\it Bi-order real hypersurfaces in a complex projective space}, {\sl Kodai Math. J.} {\bf 10} (1987), 182--196.

\end{thebibliography}
\end{document}